\documentclass[11pt]{article}
\usepackage{amsmath,amsfonts,amssymb}
\usepackage{latexsym}
\usepackage{epsfig,overpic}
\usepackage{subfigure}
\usepackage{color}
\usepackage{amsthm}

\newcommand{\E}{\mathcal E}

\newcommand{\T}{\mathcal T}



\DeclareGraphicsExtensions{.pdf,.eps,.ps,.eps.gz,.ps.gz,.eps.Y}

\def\dx{\mathrm{d}x}

\newcommand{\eps}{\varepsilon}

\newtheorem{remark}{Remark}[section]
\newtheorem{theorem}{Theorem}[section]
\newtheorem{lemma}{Lemma}[section]

\vfuzz2pt
\topmargin=-.5in
\oddsidemargin=0.5in
\evensidemargin=0.5in
\textwidth=6.0in
\textheight=9.1in
\begin{document}
\title{A variational analysis for the moving finite element method for gradient flows}
\author{ Xianmin Xu\thanks{LSEC, Institute of Computational Mathematics and Scientific/Engineering Computing,
  NCMIS, AMSS, Chinese Academy of Sciences, Beijing 100190, China; School of Mathematical Sciences, University of Chinese Academy of Sciences, Beijing 100049, China (xmxu@lsec.cc.ac.cn).
  }
}
\maketitle
\begin{abstract} 
By using the Onsager principle as an approximation tool, we give a novel  derivation
for the moving finite element method for gradient flow equations. 
We show that the discretized problem has the same energy dissipation structure as the continuous one.
This enables us to do numerical analysis for the stationary solution of a nonlinear reaction diffusion equation
using the  approximation theory of free-knot  piecewise polynomials.
We show that under certain  conditions the solution obtained by the moving finite element method 
converges to a local minimizer of the total energy when time goes to infinity.
The global minimizer, once it is detected by the discrete scheme, approximates 
 the continuous stationary solution in optimal order.
Numerical examples  for a  linear diffusion equation and a nonlinear Allen-Cahn equation
are given to verify the analytical results.
\end{abstract}
%
\section{Introduction}
The moving finite element method(MFEM) was first developed  in \cite{miller1981movingA,miller1981movingB} about forty years ago.
It  is a typical  $r$-type adaptive method\cite{huang1994moving,cao2002moving,li2001moving,tang2005moving,budd2009adaptivity},
where the mesh vertexes are relocated without changing the mesh topology.  
In the MFEM, the mesh relocation
is done by solving a dynamic equation for the vertexes
 coupled with the original partial differential equations. 
No interpolation is needed in the method since the mesh is continuous with time.
The MFEM has arisen considerable interest and has been further developed in several directions(c.f \cite{Baines,dupont1982mesh,carlson1998designA,wathen1985structure,baines2005moving,bank2019diagonally} among many others). 

However, like  all other $r$-adaptive methods, the theoretical analysis for the MFEM is far from being complete. 
The first error analysis for  MFEM was done by Dupont \cite{dupont1982mesh}, where he proved the optimal convergence
of the method
for a linear convection diffusion equation when the solution is smooth. This is not enough since we are more interested in
 non-smooth solutions for adaptive methods. 
Later on, Jimack proved the locally optimal approximation for the stationary solution of a linear parabolic equation\cite{jimack1992steady,jimack1996best,jimack1996optimal}. This is remarkable since 
the solution is allowed to have lower regularity  in Jimack's proof. 
In this study, we aim to do analysis for a nonlinear gradient flow system by using the Onsager variational principle as an approximation tool. 

The Onsager variational principle is a fundamental principle for irreversible thermodynamic processes
in statistical physics\cite{Onsager1931,Onsager1931a,DoiSoft}. It has be 
used to model many dissipative physical systems\cite{Doi2011,DoiSoft},
 such as  the Stokes equation in hydrodynamics, the Ericksen-Leslie equation in liquid
crystal, and the GNBC boundary condition for moving contact lines\cite{QianTiezheng2006}, etc.
Recently, the Onsager principle has been used as an approximation tool for many problems in two-phase flows
and in material science\cite{Doi2015,XuXianmin2016,DiYana2016,ManXingkun2017,ZhouJiajia2018,Jiang19b}.
In particular, it has  been used to derive an efficient numerical method for wetting dynamics\cite{LuXu2020}.

In this work, we first give a new derivation of the MFEM  for a gradient flow system by using the Onsager principle as an approximation tool.
 The key idea is to approximate the system in a nonlinear approximation space of free-knot piecewise polynomials\cite{devore1998nonlinear}.
Both the mesh vertexes and the nodal values of the finite element function  are regarded as  unknowns.
We  derive a system of ordinary differential equations(ODEs) for them. The ODE system  coincides
with the discrete equation of the MFEM, which has been derived in a totally different way in \cite{miller1981movingA}.
Here we do not need to compute the multiply of a Dirac measure and a discontinuous function,
so that the ``mollification" technique or any other formally calculation is not needed. 
Furthermore, our derivation shows that the discretized problem has the same energy dissipation structure of
the continuous system. This makes us to prove the energy decay property of the discrete problem easily.

 Based on the variational formula, we do  error analyse for
the MFEM for a stationary solution of the gradient flow system.
The analysis can be regarded as a generalization of the results in \cite{jimack1996best} to nonlinear equations.
We show that the MFEM gives locally best approximations to the energy. 
When a global minimizer is detected, an optimal error estimate is proved using the nonlinear
approximation theory. 
Numerical examples show that the optimal convergence  
can be obtained for a linear diffusion equation and for  stationary solutions of a nonlinear Allan-Cahn equation. 
In this paper, we mainly consider the one dimensional problem. All the results can be generalized 
to higher dimensional cases directly.


The structure of the paper is as follows. In section 2, we introduce the Onsager variational principle and show
that it can be used to derive the partial differential equation model for a gradient flow system.
In Section 3, we derive the MFEM by using the Onsager principle as an approximation tool. 
In Section 4, we do error analysis for the stationary solution of a nonlinear reaction diffusion equation.
Some numerical examples are illustrated to verify the analytical results in the last section.


\section{The Onsager variational principle for a gradient flow system}
\subsection{The Onsager principle}
Suppose a physical system is described by a time dependent function $u$.
For simplicity, we denote by $\dot{u}=\frac{\partial u}{\partial t}$ the time derivative of $u$. 
For a dissipated system, the evolution of $u$ can dissipate energy.
The dissipation function is defined as half of the total energy dissipated with respect to the flux $\dot{u}$(c.f. \cite{DoiSoft}).
For a simple gradient flow system, we assume the dissipation function is 
\begin{equation}\label{e:diss}
\Phi(\dot{u})=\frac{\xi}{2}\|\dot{u}\|^2,
\end{equation}
where $\xi$ is a positive friction coefficient and $\|\cdot\|$ is a $L^2$ norm. 
Suppose that the free energy of the system is given by a functional $\E(u)$.
For given $\dot u$, the rate of change of the total energy is calculated by
\begin{equation}\label{e:engRate}
\dot{\E}(u;\dot{u})=\langle \frac{\delta \E(u)}{\delta u},\dot{u}\rangle.
\end{equation}
Then  a Rayleighian (functional)  is defined as 
\begin{equation}\label{e:Reyl}
\mathcal{R}(u;\dot{u})=\Phi(\dot{u})+\dot{\E}(u;\dot{u}).
\end{equation}

With these definitions, the Onsager  principle can be stated as follows(\cite{DoiSoft}). For 
any given $u$ at the present time, the time derivative $\dot{u}$ is obtained by minimizing the Rayleighian
among its all possible choices. In other words, the  evolution equation of $u$ is determined by 
minimizing the Rayleighian $\mathcal{R}(u,\dot{u})$ with respect to $\dot{u}$, i.e.
\begin{equation}\label{e:Onsager}
\min_{\dot{u}\in V} \mathcal{R}(u,\dot{u}).
\end{equation}
Here $V$ is the admissible  space of $\dot{u}$.
Since the Rayleighian is a quadratic form with respect to $\dot{u}$,  the problem~\eqref{e:Onsager} is equivalent to  its 
Euler-Lagrange equation
\begin{equation}\label{e:gf1}
\xi (\dot{u},\psi) =- \langle\frac{\delta \E(u)}{\delta u},{\psi}\rangle,\qquad\qquad \forall \psi\in V,
\end{equation}
or in a simple form,
\begin{equation}\label{e:forcebalance}
\xi\dot{u}=-\frac{\delta \E(u)}{\delta u}.
\end{equation}
 In physics, the equation indicates a balance between the friction force $-\xi\dot{u}$ and
 the general driven force $\frac{\delta \E(u)}{\delta u}$.
The equation \eqref{e:forcebalance}  can be simply rewritten as a gradient flow equation,
\begin{equation}\label{e:gf}
\frac{\partial u }{\partial t}=-\xi^{-1} \frac{\delta \mathcal \E(u)}{\delta u}.
\end{equation}
%

It is easy to see that the solution $u$ of Equation \eqref{e:gf1}(or \eqref{e:gf}) satisfies  the following energy decay property
\begin{equation}\label{e:engdecay}
\frac{d \E(u)}{d t}\leq 0.
\end{equation}
Actually, by setting $\psi=\frac{\partial u}{\partial t}$ in \eqref{e:gf1}, we have
\begin{align}\label{e:engdecay1}
\frac{d \E}{d t} =\langle \frac{\delta \E(u)}{\delta u},\frac{\partial u }{\partial t} \rangle=-\xi \|\frac{\partial u }{\partial t}\|_{V}^2=-2\Phi(\dot{u})
\leq 0.
\end{align}
The rate of decreasing of the total energy is equal to twice of the dissipation function.

\subsection{The model problem}
For simplicity in presentations,  we mainly consider a specific gradient flow system in this paper,
which corresponds to a (nonlinear) reaction diffusion equation. 

 Denote by  $\Omega$ a domain in $R^n$.
Suppose the energy functional is given by 
\begin{equation}\label{e:Eng}
\E(u,\nabla u)=\int_{\Omega} \frac{\alpha}{2}(\nabla u)^2+F(x,u) dx,
\end{equation}
where $\alpha>0$ is the diffusion coefficient and $F(x,u)$ is a function with respect to $x$ and $u$. 
We assume that $u\in H^1_0(\Omega)$ and $\dot u\in L^2(\Omega)$. 
The dissipation function is given by
\begin{equation}
\Phi(\dot{u})=\frac{\xi}{2}\int_{\Omega} \dot u^2 dx,
\end{equation}
with a positive friction coefficient $\xi$.  Then the Rayleighian is calculated as 
\begin{align}
\mathcal{R}(u,\dot u)&=\Phi(\dot u)+\dot{\E}=\frac{\xi}{2}\int_{\Omega} \dot u^2 dx+ \int_{\Omega} \alpha \nabla u\cdot \nabla \dot u+ \partial_u F(x,u) \dot udx.
\label{e:Rayleig1}
\end{align}
We minimize $\mathcal{R}$ with respect to $\dot u$.

By integral by part, $\mathcal{R}$ is rewritten as 
\begin{align}\nonumber
\mathcal{R}(u,\dot u)&=\frac{\xi}{2}\int_{\Omega} \dot u^2 dx+ \int_{\Omega} [-\alpha\Delta u+\partial_u F(x,u) ]\dot udx.
\end{align}
The corresponding Euler-Langrange equation of \eqref{e:Rayleig1}  is 
\begin{equation}\label{e:modelpb}
\partial_t u -\frac{\alpha}{\xi}\Delta u+\frac{1}{\xi}  f(x,u) =0,
\end{equation}
where $f(x,u)=\partial_u F(x,u)$.
It is a quasi-linear reaction diffusion equation.
Some typical examples include the linear diffusion equation(when $F=0$) and the Allen-Cahn equation(when $F$ is 
a double-well function), etc.

In general,  we assume that $f(x,u)$ is Liptchitz continuous with respect to $u$, i.e.
\begin{equation}\label{e:Lip}
| f(x,v)- f(x,w)|\leq L_0 |v-w|, \qquad \forall v, w,
\end{equation}
for some constant $L_0>0$.  Under this condition and some assumption on the domain $\Omega$,
the equation \eqref{e:modelpb} has a unique  solution (c.f. Theorem 5.1 in \cite{pao2012nonlinear}).

\section{Derivation of the moving finite element method by the Onsager Principle}

The Onsager principle can be used to derive numerical methods for the model problem~\eqref{e:modelpb}.
The  idea is as follows. We  choose a finite dimensional subspace $V_h$ of $H^1_0(\Omega)$.
For a time dependent function $u_h(t,x)\in V_h$, 
we can compute  the energy functional $\E(u_h)$  and the dissipation function 
$\Phi(\dot{u}_h)$. By minimizing the discrete Rayleighian
$
\mathcal{R}(u_h;\dot{u}_h):=\dot{E}(u_h;\dot{u}_h)+\Phi(\dot{u}_h)
$
with respect to $\dot{u}_h$, we  obtain a dynamic equation for $u_h(t,x)$,
which is a (semi-)discrete problem for \eqref{e:modelpb}. 
For example, if we choose $V_h$ as a standard Lagrangian finite element space,  it is easy to verify that we 
can obtain a finite element problem same as that derived by the standard Galerkin approach.
In the following, we  choose $V_h$ as a nonlinear approximation space composed of
free-knot piecewise polynomials. Then we derive the MFEM proposed in \cite{miller1981movingA}.

For simplicity in notations, we consider only the one-dimensional problem hereinafter.
The domain $\Omega$ is simply  a bounded interval $I=(a,b)$.  The model problem~\eqref{e:modelpb} is
reduced to
\begin{equation}\label{e:modelpb1}
\partial_t u -\frac{\alpha}{\xi}\partial_{xx} u+\frac{1}{\xi}  f(x,u) =0.
\end{equation}

\subsection{Nonlinear  approximation space}
We  recall some known results on the nonlinear approximation space of the piecewise linear functions with free-knots \cite{devore1998nonlinear}.
Let $N$ be a positive integer and let 
$${X}:=\{a=:x_0< x_1<\cdots<x_N:=b\}$$
be a set   of ordered points in the interval $\bar{I}=[a,b]$. This generates a partition of $I$, 
which we denote as
$\mathcal{T}({X}):=\{I_k\}_{k=1}^N$, where $I_{k}=(x_{k-1},x_k)$. Let 
\begin{equation}\label{e:finiteElement}
V_h(X):=\{ v_h\in C([a,b]) | v_h \hbox{ is linear in }  I_k, \forall k=1,\cdots, N \},
\end{equation}
be the standard piecewise linear finite element space with respect to the partition $\T(X)$.
Denote by ${V}_h^N$ the space with $N$ intervals as follows
\begin{equation}\label{e:Nspace}
V_h^N:=\cup_{\#(X)=N+1} V_h(X),
\end{equation}
where $\#(X)$ denotes the cardinality of $X$. It is a function space for piecewise linear
functions with free knots $\{x_k\}$.
Notice that $V_h^N$ is not 
a linear  space, since the summation of two functions in $V_h^N$ may 
not belongs to the same space when they correspond to different partitions $X$.

For any function $u_h(x)\in V_h^N$, it can be written as
\begin{equation}\label{e:basis}
u_h(x)=\sum_{k=1}^{N-1} u_k\phi_k(x),
\end{equation}
where $\phi_k(x)$ is the standard nodal basis function with respect to a partition 
$\mathcal{T}(X)$, namely
\begin{equation*}
\phi_k(x)=\frac{x-x_{k-1}}{x_{k}-x_{k-1}}\chi_{I_k}(x)+\frac{x_{k+1}-x}{x_{k+1}-x_{k}} \chi_{I_{k+1}}(x),
\end{equation*}
where $\chi_{I_k}$ is the characteristic function corresponding to $I_k$,
\begin{equation*}
\chi_{I_k}(x)=\left\{
\begin{array}{ll}
1& \hbox{if } x \in I_{k} \\
0& \hbox{otherwise}.
\end{array}
\right.
\end{equation*}
In Equation~\eqref{e:basis}, both $u_k$ and $x_k$ ($k=1,\cdots,N-1$) can change their values.
 Therefore,
 $V_h^N$ is a $2(N-1)$ manifold. 
We remark that $V_h^N$ is not a smooth manifold in general. There exists some function $u_h\in V_h^N$
which corresponds to many different coordinates $\{u_k, x_k\ |\ k=1,\cdots N-1\}$. 
 In this sense, the manifold 
 degenerates for some functions in $V_h^N$. 

The nonlinear approximation theory for free-knot piecewise linear polynomials has been studied extensively (c.f.  \cite{petrushev1988direct,devore1998nonlinear}
and the reference therein). 
For a function $u\in H^1_0$, the best approximation of $u$ in $V_h^N$ in energy norm is defined as
\begin{equation}
\sigma_N(u)=\inf_{v_h\in V_h^N}|u-v_h|_{H^1}.
\end{equation}
Here $|u|_{H^1}:=\Big(\int_{I} (\partial_x u)^2 dx\Big)^{\frac{1}{2}}$ is the standard $H^1$ semi-norm.

To characterize the approximation property of functions in $V_h^N$, it is convenient to use the Besov spaces.
We denote by $B_q^r(L_q(I))$ a standard Besov space. 
We will not give the details of the definitions of the Besov space here, but refer to \cite{triebel1978interpolation,devore1993besov}.
We  only mention that $B_q^r(L_q(I))$  is a  space consisting of functions with smoothness order $r$ measured in $L_q$.
When $q=2$, $B_2^r(L_2(I))$ is identical to the Sobolev space $H^r$.
The following lemma is known from the literature(c.f.  \cite{
 binev2002approximation,binev2004adaptive}). 
\begin{lemma} \label{lem:approx}
If $u\in B^{s+1}_q(L_q(I))$ with $0\leq s\leq 1$ and $1/q<s+1/2$, then we have 
\begin{equation}
\sigma_N(u)\leq C N^{-s} |u|_{ B^{s+1}_q(L_q(I))}.
\end{equation}
\end{lemma}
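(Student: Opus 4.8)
The plan is to reduce this $H^1$-seminorm approximation problem to a free-knot \emph{piecewise constant} approximation problem measured in $L^2$, and then to invoke the standard nonlinear (Jackson-type) estimate for such approximations.

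First I would differentiate. If $v_h\in V_h^N$ is piecewise linear on a partition $\T(X)$ with $N$ intervals, then $\partial_x v_h$ is piecewise constant on the same partition, and conversely any piecewise constant function with $N$ pieces integrates to a continuous piecewise linear function. Since $|u-v_h|_{H^1}=\|\partial_x u-\partial_x v_h\|_{L^2}$, the quantity $\sigma_N(u)$ equals, up to the boundary constraint discussed next, the best $L^2$ approximation of $f:=\partial_x u$ by free-knot piecewise constants with $N$ pieces, call it $\Sigma_N(f)_{L^2}$. The only subtlety is that every $v_h\in V_h^N$ vanishes at $a,b$, so $\partial_x v_h$ must have zero mean; but $u\in H^1_0(I)$ forces $\int_I f=0$ as well, so replacing an unconstrained best piecewise constant $g$ by $g-\frac{1}{|I|}\int_I g$ (which keeps the same $N$ pieces) changes the $L^2$ error by at most a factor $2$ by H\"older's inequality, and its antiderivative lies in $V_h^N$. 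Hence $\sigma_N(u)\le 2\,\Sigma_N(f)_{L^2}$. Since moreover $f\in B^s_q(L_q(I))$ with $|f|_{B^s_q(L_q(I))}\le C|u|_{B^{s+1}_q(L_q(I))}$, it suffices to prove $\Sigma_N(f)_{L^2}\le CN^{-s}|f|_{B^s_q(L_q(I))}$.

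For this core estimate I would use the adaptive equidistribution argument of DeVore and Popov. The two ingredients are (i) a local Whitney estimate $E_0(f,J)_{L^2}\le C|J|^{\,s+1/2-1/q}\,|f|_{B^s_q(L_q(J))}$ for the best constant approximation on a subinterval $J$, where the exponent $s+1/2-1/q$ is \emph{positive} precisely under the hypothesis $1/q<s+1/2$; and (ii) the subadditivity of the localized Besov functional $J\mapsto|f|_{B^s_q(L_q(J))}^q$. Using (ii) I would place the $N$ knots so that each of the $N$ subintervals $J_k$ carries an equal share $\tfrac1N|f|_{B^s_q(L_q(I))}^q$ of the total smoothness, and then sum the local errors from (i). Summing $\sum_k|J_k|^{2(s+1/2-1/q)}|f|_{B^s_q(L_q(J_k))}^2$ and applying H\"older's inequality against the equidistribution identity collapses the mesh geometry and yields $\Sigma_N(f)_{L^2}^2\le CN^{-2s}|f|_{B^s_q(L_q(I))}^2$, which is the claim.

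The main obstacle is this core nonlinear estimate, and within it the balancing step: the free knots must be placed adaptively rather than on a uniform mesh (a uniform mesh would only attain the rate $s=1$ for smooth $f$ and would fail for the low-regularity functions the adaptive method is designed for), and the exponent bookkeeping that turns equidistribution of the Besov content into the clean rate $N^{-s}$ is exactly where $1/q<s+1/2$ is essential — it places $B^{s+1}_q(L_q(I))$ strictly on the subcritical side of the DeVore--Jawerth embedding line relative to $H^1$, guaranteeing both the positivity of the Whitney exponent and the convergence of the H\"older summation. Since Lemma~\ref{lem:approx} is quoted from \cite{binev2002approximation,binev2004adaptive}, one may alternatively cite their characterization of the approximation classes directly and read off the estimate from the embedding of $B^s_q(L_q(I))$ into the approximation space $A^s(L^2)$; I would state the reduction to $\Sigma_N(f)_{L^2}$ explicitly and then invoke that embedding.
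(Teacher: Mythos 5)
Your proposal is correct in outline, but note that the paper does not actually prove this lemma: it states it and refers the reader to Theorem~9.1 of \cite{binev2004adaptive} and to \cite{binev2002approximation}. What you have written is essentially a reconstruction of the argument in that cited literature (going back to Petrushev and DeVore--Popov \cite{petrushev1988direct,devore1998nonlinear}): differentiate to reduce $H^1$-seminorm approximation by free-knot piecewise linears to $L^2$ approximation of $f=\partial_x u$ by free-knot piecewise constants, then combine a local Whitney/embedding estimate $E_0(f,J)_{L^2}\le C|J|^{s+1/2-1/q}|f|_{B^s_q(L_q(J))}$ with (quasi-)subadditivity of $J\mapsto|f|_{B^s_q(L_q(J))}^q$ and a knot placement that equidistributes the local Besov content. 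The reduction step is sound, including the mean-zero correction (the factor $2$ follows from Cauchy--Schwarz exactly as you say, and the antiderivative then vanishes at both endpoints as required by the nodal representation of $V_h^N$), and your identification of $1/q<s+1/2$ as the condition making the Whitney exponent positive is the right way to see where the hypothesis enters. Two technical points that a full write-up would need to address, and which the cited references do handle: for $1/q<s+1/2$ with $s\le 1$ one can have $q<1$, so $B^s_q(L_q)$ is only quasi-normed and the triangle inequality must be replaced by its $q$-power version throughout; and exact equidistribution of $|f|_{B^s_q(L_q(J))}^q$ over the $N$ intervals need not be achievable, so one works with a near-balanced partition (or balances a modified set function including $|J|$) before carrying out the H\"older summation. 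Given that the paper itself only cites, your closing suggestion --- state the reduction to $\Sigma_N(f)_{L^2}$ explicitly and then invoke the embedding of $B^s_q(L_q(I))$ into the approximation class --- is a perfectly acceptable, and more honest, way to present the lemma.
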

The lemma is a one-dimensional version of  Theorem 9.1 in \cite{binev2004adaptive}.
The proof of the lemma can be found in \cite{binev2002approximation}.
 By this lemma, the best approximation in energy norm of a (one-dimensional) function  by a free-knot piecewise linear function 
is of order $O(N^{-1})$.
\subsection{The moving finite element method}
Consider a time dependent function $u_h(t,x)=\sum_{k=1}^{N-1}u_k(t)\phi_k(t,x)$ in $V_h^N$,
where 
\begin{equation}
\phi_k(t,x)=\frac{x-x_{k-1}(t)}{x_{k}(t)-x_{k-1}(t)}\chi_{I_k}(x)+\frac{x_{k+1}(t)-x}{x_{k+1}(t)-x_{k}(t)} \chi_{I_{k+1}}(x),
\end{equation}
 is the nodal basis function corresponding to a time dependent partition $\mathcal{T}(X(t))$ and the set $X(t)$ is given by
 $$X(t):=\{a=x_0< x_1(t)<\cdots<x_{N-1}(t)<x_N=b\}.$$ 
In the partition, the interior notes $x_k(t), k=1,\cdots, N-1,$ may change positions when time $t$ evolves.
In the formula of $u_h(t,x)$, there are $2(N-1)$ time dependent parameters 
\begin{equation}
\{u_k(t), x_k(t)\ |\ k=1,\cdots N-1\}. 
\end{equation}
We aim to approximate the solution $u$ of the model problem~\eqref{e:modelpb1} by a
discrete function $u_h(t,x)$. For that purpose, we will derive a dynamic equation for $u_k(t)$ and $x_k(t)$
by using the Onsager principle. The derivation is similar to that for continuous problems in Section~2.

We first compute the discrete energy functional and the discrete dissipation function as 
follows. Notice that the time derivative and space derivative of $u_h(t,x)$ are given by
\begin{align*}
\partial_t u_h&=\sum_{k=1}^{N-1} (\dot{u}_k(t)\phi_k(t,x)+\dot{x}_k(t)\beta_k(t,x)),\\
\partial_x u_h&=\sum_{k=1}^{N-1} {u}_k(t) \partial_x \phi_k(t,x), 
\end{align*}
where 
$$\beta_k(t,x)=\frac{\partial u_h}{\partial x_k}=-D_h u_{k-1}\frac{x-x_{k-1}(t)}{x_{k}(t)-x_{k-1}(t)}\chi_{I_k}(x)
-D_h  u_{k}\frac{x_{k+1}(t)-x}{x_{k+1}(t)-x_{k}(t)}\chi_{I_{k+1}}(x),$$
with $D_h  u_k=\frac{u_{k+1}(t)-u_{k}(t)}{x_{k+1}(t)-x_{k}(t)}$. 
Then the discrete energy functional $\E$ with respect to  $u_h$ is calculated by 
\begin{equation}
\E_h(u_1,\cdots,u_{N-1},x_1,\cdots,x_{N-1}):=\E(u_h)=\sum_{k=1}^{N} \int_{I_k}\frac{\alpha}{2}(\partial_x u_h)^2+F(x,u_h) \dx.
\end{equation}
It is a nonlinear function with respect to  $\{u_k(t), x_k(t)\ |\ k=1,\cdots N-1\}$. The discrete dissipation function is given by
\begin{align}
&\Phi_h(u_1,\cdots,u_{N-1},x_1,\cdots,x_{N-1};
\dot u_1,\cdots,\dot{u}_{N-1},\dot{x}_1,\cdots,\dot{x}_{N-1})\nonumber \\
&:=\Phi(\partial_t u_h(t,x))= \frac{\xi}{2}\int_{I}(\dot{u}_k(t)\phi_k(t,x)+\dot{x}_k(t)\beta_k(t,x))^2 dx.\label{e:dissip_h}
\end{align}
It is a quadratic function with respect to  $\{\dot{u}_k(t), \dot{x}_k(t)\ |\ k=1,\cdots N-1\}$.

We apply the Onsager
principle for $\{\dot{u}_k(t), \dot{x}_k(t)\ |\ k=1,\cdots N-1\}$. We minimize the discrete Rayleighian $\mathcal{R}_{h}$ 
with respect to $\dot{u}_k$ and $\dot{x}_k$:
\begin{equation}
\min_{\dot{u}_k,\dot{x}_k} \mathcal{R}_{h}:=\Phi_h+\dot{\E}_h.
\end{equation}
The problem is equivalent to  its Euler-Lagrange equation 
\begin{equation}\label{e:Onsager1}
\left\{
\begin{array}{ll}
\frac{\partial \Phi_h}{\partial \dot{u}_k}+\frac{\partial \E_h}{\partial u_k}=0, &k=1,\cdots, N-1;\\
\frac{\partial \Phi_h}{\partial \dot{x}_k}+\frac{\partial \E_h}{\partial x_k}=0, &k=1,\cdots, N-1.
\end{array}
\right.
\end{equation}
This gives an  ordinary differential system for $\{u_k(t), x_k(t)\ |\ k=1,\cdots N-1\}$.

We now derive the explicit formula for the system \eqref{e:Onsager1}.
Denote  $\mathbf{u}=(u_1,\cdots, u_{N-1})^T$ and 
$\mathbf{x}=(x_1,\cdots, x_{N-1})^T$. 
Notice that $\Phi_h$ is a quadratic function with respect to $\{\dot{u}_k(t),\dot{x}_k(t)\ |\ k=1,\cdots N-1\}$.
Then the equation~\eqref{e:Onsager1} can be rewritten as 
\begin{equation}\label{e:ODE0}
\left(\begin{array}{cc}
\mathbf{A} &\mathbf{B} \\
\mathbf{B} &\mathbf{C}
\end{array}
\right)
\left(
\begin{array}{l}
\dot{\mathbf{u}}\\
\dot{\mathbf{x}}
\end{array}
\right)=
\left(
\begin{array}{l}
{\mathbf{f}}\\
{\mathbf{g}}
\end{array}
\right).
\end{equation}
The right hand side terms are given by
$$\mathbf{f}=(f_1,f_2,\cdots,f_{N-1})^T, \qquad \hbox{with }$$
$$ f_k=-\frac{1}{\xi}\frac{\partial \E_h}{\partial u_k}=-\frac{1}{\xi}\int_{I}\alpha\partial_x u_h \partial_x \phi_k+f(x,u_h)\phi_k dx.
$$
and 
$$\mathbf{g}=(g_1,g_2,\cdots,g_{N-1})^T, \qquad \hbox{with }$$
$$
g_k=-\frac{1}{\xi}\frac{\partial \E_h}{\partial x_k}=-\frac{1}{\xi}\Big(\int_{I}\alpha\partial_x u_h \partial_x \beta_k+f(x,u_h)\beta_k dx-\frac{\alpha}{2}[(\partial_x u_h)^2]|_{x_{k}}\Big),
$$
where the jump $[(\partial_x u_h)^2]|_{x_{k}}:=(\partial_x u_h)^2|_{I_{k}}-(\partial_x u_h)^2|_{I_{k-1}}$.
The  blocks $\mathbf{A}$, $\mathbf{B}$ and $\mathbf{C}$ in the coefficient matrix are $(N-1)\times(N-1)$ tridiagonal matrices, whose
nonzero elements  are functions of $\{u_k\}$ and $\{x_k\}$. 
Direct computations for the nonzero elements of $\mathbf{A}$ give
\begin{align*}
&a_{k,k}=\int_{I_k\cup I_{k+1}} \phi_k^2 dx,\qquad \qquad\qquad\quad \ \  k=1,\cdots,N-1;\\
&a_{k,k+1}=a_{k+1,k}=\int_{I_{k+1}} \phi_k\phi_{k+1} dx, \qquad k=1,\cdots,N-2.
\end{align*}
This implies  $\mathbf{A}$ is the mass matrix in the standard finite element method on
a given triangulation $\mathcal{T}(X)$ with $X=\{a=x_0<x_1<\cdots<x_N=b\}$.
The nonzero elements of the matrix $\mathbf{B}$ are computed as
\begin{align*}
&b_{k,k}=\int_{I_k\cup I_{k+1}} \phi_k\beta_k dx,\qquad \qquad\qquad\qquad\quad \  k=1,\cdots,N-1;\\
&b_{k,k+1}=b_{k+1,k}=\int_{I_{k+1}}\phi_k\beta_{k+1}dx,\qquad\qquad \ \  k=1,\cdots,N-2.
\end{align*}
We can also compute the nonzero elements of the matrix $\mathbf{C}$,
\begin{align*}
&c_{k,k}=\int_{I_k\cup I_{k+1}}\beta_k^2 dx, \qquad \qquad\qquad\qquad\quad\  k=1,\cdots,N-1;\\
&c_{k,k+1}=c_{k+1,k}=\int_{I_{k+1}}\beta_k\beta_{k+1}dx,\qquad \qquad   \!  k=1,\cdots,N-2.
\end{align*}

The ordinary differential system~\eqref{e:ODE0} is exactly the same as the moving finite element scheme  in~\cite{miller1981movingA}.
The  scheme was originally derived through a $L^2$ projection of a partial differential equation in the tangential space of $u_h$.
There one needs to compute the inner product of a Dirac function and a discontinuous function, which is not well-defined even in
a distribution sense. 
Our derivation is much simpler than that in \cite{miller1981movingA}.  In this sense, the Onsager 
principle gives a natural variational framework for the moving finite element method.

\subsection{The stabilized scheme}
The coefficient matrix in \eqref{e:ODE0} may degenerate for some function $u_h\in V_h^N$. This is because
the dissipation function $\Phi_h$ is a semi-positive definite quadratic form with respect to $\dot{u}_k$ and $\dot{x}_k$.
The semi-positive definiteness is related to the degeneracy of  the manifold $V_h^N$.
For example,  if we set
$u_0(t)=u_1(t)=\cdots=u_{N}(t)=0,$
then $u_h(x,t)\equiv 0$ for all possible choice of $X$.
That implies $\partial_t u_h\equiv 0$ and  $\Phi_h\equiv 0$  for some nonzero $\dot{x}_{k}$.

 To overcome the degeneracy of the system \eqref{e:ODE0}, we 
add a stabilized term as follows,
\begin{equation}
\Phi_h^{\delta}=\Phi_h+ \frac{\delta\xi}{2} \sum_{k=1}^{N-1}\dot{x}_k^2,
\end{equation}
where $\delta>0$ is a small stabilization parameter. 
Other stabilizations can also apply, c.f. \cite{miller1981movingB}.
With the modified dissipation function $\Phi^{\delta}_h$,  by the Onsager principle,  
$\dot{u}_k$ and $\dot{x}_k$ are obtained by
\begin{equation}
\min_{\dot{u}_k,\dot{x}_k} \Phi_h^{\delta} +\dot{\E}_h.
\end{equation}
This leads to a modified  system
\begin{equation}\label{e:Onsager2}
\left\{
\begin{array}{ll}
\frac{\partial \Phi_h^\delta}{\partial \dot{u}_k}+\frac{\partial \E_h}{\partial u_k}=0,&k=1,\cdots, N-1;\\
\frac{\partial \Phi_h^\delta}{\partial \dot{x}_k}+\frac{\partial \E_h}{\partial x_k}=0,&k=1,\cdots, N-1.
\end{array}
\right.
\end{equation}
Once again it is an ordinary differential system for $\{u_k(t), x_k(t)\ |\ k=1,\cdots N-1\}$.
The explicit form of the system \eqref{e:Onsager2} is 
\begin{equation}\label{e:ODE}
\left(\begin{array}{cc}
\mathbf{A} &\mathbf{B} \\
\mathbf{B} &\mathbf{C}+\delta \mathbf{I}
\end{array}
\right)
\left(
\begin{array}{l}
\dot{\mathbf{u}}\\
\dot{\mathbf{x}}
\end{array}
\right)=
\left(
\begin{array}{l}
{\mathbf{f}}\\
{\mathbf{g}}
\end{array}
\right).
\end{equation}
The positive definiteness of the coefficient matrix 
$\mathbf{M}_{\delta}:=\left(\begin{array}{cc}
\mathbf{A} &\mathbf{B} \\
\mathbf{B} &\mathbf{C}+\delta \mathbf{I}
\end{array}\right)$
is given by the following lemma.
\begin{lemma}\label{lem:posdef}
For any given $\delta>0$, $X(t)=\{a=x_0<x_1(t)<\cdots<x_{N-1}(t)<x_N=b\}$, and $\{u_k(t) | k=1,\cdots, N-1\}$,
the coefficient matrix $\mathbf{M}_{\delta}$ is positive definite.
\end{lemma}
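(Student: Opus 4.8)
The plan is to recognize $\mathbf{M}_\delta$ as the Hessian, in the velocity variables, of the stabilized dissipation $\Phi_h^\delta$, and to exploit the fact that $\Phi_h$ is (up to the constant $\xi$) the squared $L^2$-norm of $\partial_t u_h$. Concretely, I would collect the velocities into $\mathbf{w}=(\dot{\mathbf{u}}^T,\dot{\mathbf{x}}^T)^T\in\mathbb{R}^{2(N-1)}$ and introduce the linear map $L\mathbf{w}:=\sum_{k=1}^{N-1}(\dot u_k\phi_k+\dot x_k\beta_k)=\partial_t u_h\in L^2(I)$. Reading off the entries of $\mathbf{A},\mathbf{B},\mathbf{C}$ as the inner products $\int\phi_j\phi_k$, $\int\phi_j\beta_k$, $\int\beta_j\beta_k$, one sees that the unstabilized block matrix is exactly the Gram matrix of the family $\{\phi_k\}\cup\{\beta_k\}$, so that
\[
\mathbf{w}^T\begin{pmatrix}\mathbf{A}&\mathbf{B}\\\mathbf{B}&\mathbf{C}\end{pmatrix}\mathbf{w}=\|L\mathbf{w}\|_{L^2(I)}^2=\int_I(\partial_t u_h)^2\,\dx\ge 0.
\]
This already makes the block matrix symmetric positive semidefinite and explains structurally why the unstabilized system~\eqref{e:ODE0} may degenerate.

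Next I would add the stabilization and split the quadratic form:
\[
\mathbf{w}^T\mathbf{M}_\delta\mathbf{w}=\int_I(\partial_t u_h)^2\,\dx+\delta\sum_{k=1}^{N-1}\dot x_k^2\ge 0,
\]
so $\mathbf{M}_\delta$ is at least semidefinite and it remains only to rule out a nontrivial kernel. Assume $\mathbf{w}^T\mathbf{M}_\delta\mathbf{w}=0$. Since both summands are nonnegative, each vanishes separately. The second gives $\delta\sum_k\dot x_k^2=0$, hence $\dot{\mathbf{x}}=0$ because $\delta>0$. Feeding this back, $\int_I(\sum_k\dot u_k\phi_k)^2\,\dx=0$, i.e. $\sum_k\dot u_k\phi_k\equiv 0$. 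Because the nodal hat functions $\{\phi_k\}_{k=1}^{N-1}$ of $V_h(X)$ are linearly independent (each equals $1$ at $x_k$ and $0$ at the remaining nodes), this forces $\dot{\mathbf{u}}=0$. Thus $\mathbf{w}=0$ and $\mathbf{M}_\delta$ is positive definite.

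The key structural point, and the only place where $\delta$ is used, is this clean split of $\mathbf{w}^T\mathbf{M}_\delta\mathbf{w}$ into the degenerate form $\|\partial_t u_h\|_{L^2}^2$ and the coercive term $\delta|\dot{\mathbf{x}}|^2$. The main thing to watch is that the $\dot{\mathbf{x}}$-degeneracy (for instance the $u_h\equiv 0$ example, where every $\beta_k$ vanishes) is absorbed entirely by the $\delta$ term, after which linear independence of the $\phi_k$ disposes of the $\dot{\mathbf{u}}$-directions; no positivity of $\mathbf{C}$ is needed, which is fortunate since $\mathbf{C}$ is genuinely only semidefinite. I would also note in passing that symmetry of the full block matrix is automatic from its reading as a Gram matrix, so the off-diagonal blocks are transposes of one another irrespective of whether $\mathbf{B}$ itself is symmetric.
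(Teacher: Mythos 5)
Your proof is correct and follows essentially the same route as the paper's: you use the semi-positive definiteness of the unstabilized block matrix (as the Gram matrix of $\{\phi_k\}\cup\{\beta_k\}$, i.e.\ the dissipation $\int_I(\partial_t u_h)^2\,dx$), let the $\delta$ term kill the $\dot{\mathbf{x}}$ directions, and then reduce to positive definiteness of the mass matrix $\mathbf{A}$, which is exactly your linear-independence argument for the hat functions. The only difference is presentational: the paper splits into the cases $\mathbf{y}_2\neq 0$ and $\mathbf{y}_2=0$, whereas you show the kernel of the quadratic form is trivial; these are logically the same argument.
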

\begin{proof}
For any $\mathbf{y}\in R^{2(N-1)}$, we can denote it as $\mathbf{y}=\left(
\begin{array}{l}
{\mathbf{y}_1}\\
{\mathbf{y}_2}
\end{array}
\right)$, where $\mathbf{y}_i\in R^{N-1}$, $i=1,2$. We suppose $\mathbf{y}\neq 0$.
If $\mathbf{y}_2\neq 0$, we can have
\begin{equation}
\mathbf{y}^T \mathbf{M}_{\delta} \mathbf{y}=  \mathbf{y}^T \mathbf{M}_{0} \mathbf{y}+\delta \mathbf y_2^T\mathbf{y}_2
\geq \delta \mathbf y_2^T\mathbf{y}_2>0.
\end{equation}
where $ \mathbf{M}_{0} = \left(\begin{array}{cc}
\mathbf{A} &\mathbf{B} \\
\mathbf{B} &\mathbf{C}
\end{array}\right)$ and we have used the fact that 
$$\mathbf{y}^T \mathbf{M}_{0} \mathbf{y}=\Phi_h(u_1,\cdots,u_{N-1},x_1,\cdots,x_{N-1};\mathbf{y}_1,\mathbf{y}_2)$$
 is semi-positive definite by its definition~\eqref{e:dissip_h}. 
Otherwise, if $\mathbf{y}_2=0$, then $\mathbf{y}_1\neq 0$. We have
\begin{equation}
\mathbf{y}^T \mathbf{M}_{\delta} \mathbf{y}=  \mathbf{y}_1^T \mathbf{A} \mathbf{y}_1>0,
\end{equation}
for any given  partition $\mathcal{T}(X(t))$ of the interval $I$, since the matrix $\mathbf{A}$ is
the standard mass matrix of the linear finite element method with respect to the partition $\mathcal{T}(X(t))$
and thus positive definite.
Combine the analysis together, we show that  $\mathbf{M}_{\delta}$ is positive definite.
\end{proof}

By this lemma,  the ODE system \eqref{e:ODE} can be rewritten as
\begin{equation}\label{e:ODE1}
\left(
\begin{array}{l}
\dot{\mathbf{u}}\\
\dot{\mathbf{x}}
\end{array}
\right)=
\mathbf{M}_{\delta}^{-1}
\left(
\begin{array}{l}
{\mathbf{f}}\\
{\mathbf{g}}
\end{array}
\right).
\end{equation}
The  equation has a unique solution  for any initial value $\mathbf{u}(0)$ and $\mathbf{x}(0)$ when the right hand side 
function $\left(
\begin{array}{l}
{\mathbf{f}(\mathbf{u},\mathbf{x})}\\
{\mathbf{g}(\mathbf{u},\mathbf{x})}
\end{array}
\right)$ is Lipschitz continuous with respect to $\mathbf{u}$ and $\mathbf{x}$.
The stiffness of the
  system and its numerical solution has been analyzed in \cite{miller1981movingB,wathen1985structure}. 
When the numerical solution is non-degenerate in $V_h^N$, the stabilization parameter $\delta$ can be chosen as zero.


\section{Numerical analysis}
One important advantage of nonlinear approximations is that they have better  accuracy
than linear approximations. This is illustrated in Lemma~\ref{lem:approx} for the interpolation error.
One would expect that the MFEM also has better accuracy than the standard FEM.
However, the theoretical analysis is very difficult. 
In this section, we consider only the stationary solution when $t$ goes to infinity.

\subsection{The energy decay property}
We first prove the following discrete energy decay property of the ordinary differential system~\eqref{e:ODE}. 
\begin{theorem}\label{th:decay}
Let $(\mathbf{u}(t),\mathbf{x}(t))$ be 
the solution of the equation \eqref{e:ODE} and $u_h(t,x)\in V_h^N$ be the corresponding piecewise linear function.
Then we have
\begin{equation}
\frac{d \E(u_h)}{d t}
\leq 0,
\end{equation}
where the equality holds only when $\dot{x}_k=0$ and $\dot{u}_k=0$ for $k=1,\cdots,N-1$.
\end{theorem}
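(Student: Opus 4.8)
The plan is to exploit the gradient-flow structure that the Onsager derivation has already built into the system \eqref{e:ODE}. Since $\E(u_h) = \E_h(\mathbf{u}(t),\mathbf{x}(t))$ depends on time only through the $2(N-1)$ parameters, the first step is to differentiate by the chain rule,
\[
\frac{d\E(u_h)}{dt} = \sum_{k=1}^{N-1}\frac{\partial \E_h}{\partial u_k}\dot{u}_k + \sum_{k=1}^{N-1}\frac{\partial \E_h}{\partial x_k}\dot{x}_k = (\nabla_{\mathbf{u}}\E_h)\cdot\dot{\mathbf{u}} + (\nabla_{\mathbf{x}}\E_h)\cdot\dot{\mathbf{x}}.
\]
The point of writing it this way is that the components of the right-hand side of \eqref{e:ODE} are, by their very definition, $f_k=-\xi^{-1}\partial\E_h/\partial u_k$ and $g_k=-\xi^{-1}\partial\E_h/\partial x_k$, so the energy gradient is nothing but $-\xi$ times the force vector.

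Next I would substitute $\nabla_{\mathbf{u}}\E_h = -\xi\mathbf{f}$ and $\nabla_{\mathbf{x}}\E_h = -\xi\mathbf{g}$ to get
\[
\frac{d\E(u_h)}{dt} = -\xi\begin{pmatrix}\dot{\mathbf{u}}\\\dot{\mathbf{x}}\end{pmatrix}^{T}\begin{pmatrix}\mathbf{f}\\\mathbf{g}\end{pmatrix},
\]
and then use the ODE system \eqref{e:ODE} itself, in the form $(\mathbf{f},\mathbf{g})^{T} = \mathbf{M}_{\delta}(\dot{\mathbf{u}},\dot{\mathbf{x}})^{T}$, to close the expression as a single quadratic form in the velocities,
\[
\frac{d\E(u_h)}{dt} = -\xi\begin{pmatrix}\dot{\mathbf{u}}\\\dot{\mathbf{x}}\end{pmatrix}^{T}\mathbf{M}_{\delta}\begin{pmatrix}\dot{\mathbf{u}}\\\dot{\mathbf{x}}\end{pmatrix}.
\]
Finally I would invoke Lemma \ref{lem:posdef}: since $\mathbf{M}_{\delta}$ is positive definite and $\xi>0$, the right-hand side is nonpositive, and it vanishes if and only if $(\dot{\mathbf{u}},\dot{\mathbf{x}})=0$, i.e. $\dot{u}_k=\dot{x}_k=0$ for every $k$. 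This yields simultaneously the inequality and the stated characterization of equality.

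As for the difficulty, this is essentially a bookkeeping computation once the variational structure is recognized, so I do not expect a genuine analytical obstacle. The only points that require care are keeping the sign and the factor $\xi$ consistent when passing between $\nabla\E_h$ and $(\mathbf{f},\mathbf{g})$, and verifying that the chain rule is legitimate along the trajectory. The latter needs $\E_h$ to be continuously differentiable in the parameters, which holds as long as the nodes remain ordered, $x_{k-1}<x_k$, so that the solution stays in the non-degenerate part of the manifold $V_h^N$; the positive definiteness furnished by Lemma \ref{lem:posdef} is precisely what guarantees a well-defined, smooth velocity field $\mathbf{M}_{\delta}^{-1}(\mathbf{f},\mathbf{g})^{T}$ there.
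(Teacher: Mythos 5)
Your proposal is correct and follows exactly the paper's own argument: chain rule on $\E_h$, the identification $\partial\E_h/\partial u_k=-\xi f_k$, $\partial\E_h/\partial x_k=-\xi g_k$, substitution of the ODE system \eqref{e:ODE} to obtain $-\xi(\dot{\mathbf{u}}^T,\dot{\mathbf{x}}^T)\mathbf{M}_\delta(\dot{\mathbf{u}}^T,\dot{\mathbf{x}}^T)^T$, and then Lemma~\ref{lem:posdef}. The remarks on smoothness along non-degenerate trajectories are a reasonable extra caution but not part of the paper's (brief) proof.
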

\begin{proof}
Notice that 
$$\frac{d \E(u_h)}{d t}=\sum_{k=1}^{N-1}(\frac{\partial \E_h }{ \partial u_k}\dot{u}_k+\frac{\partial \E_h }{\partial x_k}\dot{x}_k)=
-\xi \sum_{k=1}^{N-1}(f_k\dot{u}_k+g_k\dot{x}_k)=-\xi (\dot{\mathbf{u}}^T,\dot{\mathbf{x}}^T) \mathbf M_{\delta} \left(
\begin{array}{l}
\dot{\mathbf{u}}\\
\dot{\mathbf{x}}
\end{array}
\right),
$$
where we have used the equation \eqref{e:ODE}. 
By Lemma~\ref{lem:posdef}, we finish the proof of the theorem.
\end{proof}

From the proof, we easily see that 
\begin{equation}\label{e:disStru}
\frac{d \E(u_h)}{d t}=-2\Phi_h^{\delta}.
\end{equation}
This implies the semi-discrete problem~\eqref{e:ODE} preserves the
energy dissipation structure \eqref{e:engdecay1} of the original gradient flow system. 



\subsection{Error analysis for the equilibrium state}
We now consider the stationary solution of \eqref{e:modelpb1}.
When $t$ goes to infinity, we suppose the solution $u(t,x)$ of \eqref{e:modelpb1} converge to a function $u^\infty(x)$
satisfying
\begin{equation}\label{e:EL_S}
\int_I \alpha \partial_x u^\infty\partial_x v+ f(x,u^\infty) v \dx=0,\qquad \forall v\in H^1_0(I).
\end{equation}
 To do analyse for this problem, we need one more assumption for the Lipschitz constant $L_0$ in \eqref{e:Lip} that
\begin{itemize} 
\item[(H1)]{\qquad\qquad\qquad\qquad ${\alpha}- L_0 c_0^2 >c_1$, \qquad\ \  for some constant $c_1>0$,}
\end{itemize}
where $c_0$ is a constant in the Poincare inequality
\begin{equation}\label{e:Sob}
\|u\|_{L^2(I)}\leq c_0|u|_{H^1(I)}.
\end{equation}
Under this assumption, it is easy to see that the equation~\eqref{e:EL_S} is elliptic and has a unique solution.
Furthermore, $u^{\infty}$ is the unique minimizer for the energy minimization problem
\begin{equation}\label{e:conEng}
\inf_{v\in H^1_0} \E(v).
\end{equation} 


In the following, we consider the approximation of the problem~\eqref{e:EL_S} by the MFEM.
For the stationary solution for  the discrete problem~$\eqref{e:ODE}$, we have the following theorem.
\begin{theorem}\label{th:engRel}
Let $(\mathbf{u}(t),\mathbf{x}(t))$ be 
the solution of the equation \eqref{e:ODE}  which corresponds to a nondegenerate function $u_h(t,x)\in V_h^N$.
 When $t$ goes to infinity,  $(\mathbf{u}(t),\mathbf{x}(t))$ will converge to a vector $(\mathbf{u}^{\infty},\mathbf{x}^{\infty})$.
 Suppose that $\mathbf{x}^{\infty}$  corresponds to a partition of $I$, then
$u_h(t,x)$ converges to a function $u_h^{\infty}(x)\in V_h^N$ which satisfies 
\begin{equation}\label{e:discreteEL}
\left\{
\begin{array}{l}
\int_{I}\alpha\partial_x u_h^{\infty} \partial_x \phi_k+f(x,u_h^{\infty})\phi_k dx=0, \qquad\qquad\qquad\qquad\quad\  k=1,\cdots, N-1,\\
\int_{I}\alpha\partial_x u_h^{\infty} \partial_x \beta_k+f(x,u_h^{\infty})\beta_k dx-\frac{\alpha}{2}[(\partial_x u_h^{\infty} )^2]|_{x_{k}^{\infty}}=0, \qquad k=1,\cdots, N-1.
\end{array}
\right.
\end{equation}
Furthermore, if $u_h^{\infty}$ is nondegenerate and asymptotic stable, it is a local minimizer of  $\E(u_h)$ in $V_h^N$.
\end{theorem}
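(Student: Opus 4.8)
The plan is to build everything on the gradient-flow structure made explicit in Theorem~\ref{th:decay}, namely $\frac{d\E(u_h)}{dt}=-2\Phi_h^{\delta}\le 0$, so that $\E(u_h(t))$ is non-increasing along the trajectory of \eqref{e:ODE}. For the convergence claim I would first note that assumption (H1) makes the second variation of $\E$ bounded below by $c_1|\cdot|_{H^1}^2$, hence $\E$ is strictly convex and coercive on $H_0^1(I)$; in particular $\E(u_h(t))$ is bounded below and, together with monotonicity, converges to some value $\E^{\infty}$. Coercivity also confines $(\mathbf{u}(t),\mathbf{x}(t))$ to a bounded set, so the trajectory is precompact. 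Integrating $\frac{d\E}{dt}=-2\Phi_h^{\delta}$ over $(0,\infty)$ yields $\int_0^{\infty}\Phi_h^{\delta}\,dt<\infty$, so $\Phi_h^{\delta}(t_n)\to 0$ along some sequence $t_n\to\infty$; since Lemma~\ref{lem:posdef} keeps $\mathbf{M}_{\delta}$ uniformly positive definite on the bounded set, this forces $(\dot{\mathbf{u}},\dot{\mathbf{x}})\to 0$, so every $\omega$-limit point is an equilibrium. Invoking the nondegeneracy of $u_h$ to guarantee that the equilibria are isolated, I would then conclude that the $\omega$-limit set reduces to the single point $(\mathbf{u}^{\infty},\mathbf{x}^{\infty})$.

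Next I would identify the limit. At $(\mathbf{u}^{\infty},\mathbf{x}^{\infty})$ we have $\dot{\mathbf{u}}=\dot{\mathbf{x}}=0$, so the system \eqref{e:ODE} collapses to $\mathbf{f}(\mathbf{u}^{\infty},\mathbf{x}^{\infty})=0$ and $\mathbf{g}(\mathbf{u}^{\infty},\mathbf{x}^{\infty})=0$. Provided $\mathbf{x}^{\infty}$ is a genuine partition (no interior nodes have coalesced), the map $(\mathbf{u},\mathbf{x})\mapsto u_h$ is continuous near the limit, so $u_h(t,\cdot)\to u_h^{\infty}\in V_h^N$. Substituting the explicit expressions for $f_k$ and $g_k$ recorded after \eqref{e:ODE0} then reproduces exactly the two families of equations in \eqref{e:discreteEL}.

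For the last assertion, write $y^{\ast}:=(\mathbf{u}^{\infty},\mathbf{x}^{\infty})$ and suppose $y^{\ast}$ is asymptotically stable, so that some neighborhood $U$ of $y^{\ast}$ has every trajectory starting in it converging to $y^{\ast}$. For any $y_0\in U$, the energy monotonicity of Theorem~\ref{th:decay} gives $\E_h(y_0)\ge\E_h(y(t))\ge\lim_{t\to\infty}\E_h(y(t))=\E_h(y^{\ast})$, the last equality using continuity of $\E_h$ and $y(t)\to y^{\ast}$. Hence $\E_h\ge\E_h(y^{\ast})$ throughout $U$, so $y^{\ast}$ minimizes $\E_h$ locally in parameter space; since $u_h^{\infty}$ is nondegenerate, the coordinate map is a local homeomorphism near $y^{\ast}$, and this transfers to local minimality of $u_h^{\infty}$ for $\E$ within $V_h^N$.

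The hardest part is the convergence in the first step: energy monotonicity alone only drives the trajectory toward the equilibrium set, and pinning the limit down to a single point requires the equilibria to be isolated—precisely where the nondegeneracy hypothesis is indispensable (absent isolation, one would instead have to supply a \L{}ojasiewicz-type gradient inequality). The attendant bookkeeping—keeping $\mathbf{M}_{\delta}$ uniformly positive definite along the whole trajectory and excluding node collisions so that the limit stays inside $V_h^N$—is what makes this step rigorous rather than merely formal.
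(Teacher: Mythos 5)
Your proposal is correct and follows the same skeleton as the paper's proof: energy dissipation via $\frac{d}{dt}\E(u_h)=-2\Phi_h^{\delta}$, boundedness of $\E$ from below, vanishing of $(\dot{\mathbf u},\dot{\mathbf x})$ in the limit, identification of the stationary equations $\mathbf f=\mathbf g=0$ with \eqref{e:discreteEL}, and finally stability implying local minimality. Where you differ is in the execution of the two delicate steps, and in both cases your version is the more careful one. For convergence of the trajectory, the paper integrates the energy identity, deduces $\lim_{t\to\infty}\dot x_k=0$, and then asserts that boundedness of $x_k$ yields convergence; as you implicitly recognize, integrability of $\dot x_k^2$ plus boundedness does not by itself pin down a single limit (consider $x(t)=\sin(\ln t)$), so your route through precompactness, the $\omega$-limit set, and isolatedness of equilibria (or a \L{}ojasiewicz inequality) is the honest repair, at the cost of making explicit a hypothesis the paper leaves implicit in the phrase ``will converge.'' For the final assertion, the paper argues by contradiction (``a non-trivial trajectory near $u_h^{\infty}$ which makes the energy nondecreasing''), whereas you argue directly that every initial point $y_0$ in the basin of attraction satisfies $\E_h(y_0)\ge\lim_{t\to\infty}\E_h(y(t))=\E_h(y^{*})$ by monotonicity and continuity; this is the cleaner and fully rigorous form of the same Lyapunov-type observation, and your remark that nondegeneracy is needed to transfer local minimality from parameter space to $V_h^N$ correctly identifies the role of that hypothesis. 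One minor caveat: from $\int_0^{\infty}\Phi_h^{\delta}\,dt<\infty$ you only get $\Phi_h^{\delta}(t_n)\to0$ along a sequence; to conclude that \emph{every} $\omega$-limit point is an equilibrium you should add that the right-hand side of \eqref{e:ODE1} is bounded on the compact set containing the trajectory, so $\Phi_h^{\delta}$ is uniformly continuous in $t$ and the full limit holds — the paper glosses over the same point.
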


\begin{proof}
By Equation~\eqref{e:disStru}, we have the energy decay property
$$
\frac{d \E(u_h(t))}{dt}=-2\Phi(\partial_t u_h)-\delta\xi \sum_{k=1}^N\dot{x}_k^2\leq 0, \qquad\qquad \forall t>0.
$$
Therefore, we have
\begin{align}\label{e:temp1}
\E(u_h(T))=\E(u_h(0))-\xi\int_0^T\int_I(\partial_t u_h)^2dx dt-\delta\xi \sum_{k=1}^N\int_0^T\dot{x}_k^2dt, \qquad \hbox{for } T>0.
\end{align}
Notice that
$$
\E(u_h(T))\geq \inf_{v_h\in V_h^N}\E(v_h)\geq\inf_{v\in H^1_0(I)}\E(v)=\E(u^{\infty}).
$$
Thus $\E(u_h(T))$ is bounded from below when $T$ goes to infinity.
By Equation~\eqref{e:temp1}, we have $\lim_{t\rightarrow \infty}\dot{x}_k(t)=0$ and $\lim_{t\rightarrow \infty}\int_I(\partial_t u_h)^2dx=0$.
Since $x_k$ is in the bounded interval $I$, $x_k(t)$ converges when $t$ goes to infinity.
Similarly, $u_k(t)$ also converges when $t$ goes to infinity.
When the limit $\mathbf{x}^{\infty}$ generates a partition of $I$, 
then $(\mathbf{u}^{\infty},\mathbf{x}^{\infty})$ corresponds to a function $u_h^{\infty}$ in $V_h^N$.
From Equation~\eqref{e:ODE}, we have 
\begin{equation*}
\lim_{t\rightarrow\infty} f_k(\mathbf{u},\mathbf{x}) =0, \quad \lim_{t\rightarrow\infty} g_k(\mathbf{u},\mathbf{x})=0, \qquad \hbox{for } k=1,\cdots,N-1.
\end{equation*}
This leads to the equation~\eqref{e:discreteEL}, i.e.
$u_h^\infty$ is a critical point of the  energy $\E(\cdot)$ in the manifold $V_h^N$.

When $u_h^{\infty}$ is nondegenerate and asymptotic stable,  it is a local minimizer of $\E$ in $V_h^N$. 
Otherwise, if it is a critical point but not a local minimizer of $\E$, there exists a non-trivial trajectory near $u_h^{\infty}$ which makes
the energy nondecreasing. This contradicts with the energy decay property in Theorem~\ref{th:decay}.
\end{proof}

\begin{remark}
If $\E(u)$ has a quadratic form, Equation~\eqref{e:EL_S} is a linear equation.  Then the theorem implies that $u_h^{\infty}$ is
a locally best approximation in energy norm. For example, suppose
$$\E(u)=\int_{I}\frac{\alpha}{2}(\partial_x u)^2+f(x) u \dx,$$
then by  Equation~\eqref{e:EL_S}, we have
\begin{align*}
\E(v_h)&= \int_{I}\frac{\alpha}{2}(\partial_x v_h)^2+f(x) v_h \dx\\
&=\int_{I}\frac{\alpha}{2}(\partial_x v_h)^2- \alpha \partial_x u^\infty \partial_x v_h \dx\\
&=\int_{I}\frac{\alpha}{2}(\partial_x v_h-\partial_x u^\infty )^2dx -\int_{I}\frac{\alpha}{2} (\partial_x u^\infty)^2 \dx.
\end{align*}
Therefore, Theorem~\ref{th:engRel} implies that $u_h^\infty$ is a locally best approximation of $u^\infty$ in the energy norm
when $u_h^{\infty}$ is nondegenerate and asymptotic stable.
This is one main result obtained in \cite{jimack1996best}.
\end{remark}


Notice the admissible set $V_h^N$ for the discrete function is a $2(N-1)$ dimensional manifold in $H^1_0(I)$.
In general the energy minimization problem $\E(\cdot)$ in the manifold $V_h^N$ is not convex, even when $\E(\cdot)$ is 
a convex functional. This is very different from the standard FEM, noticing that the minimizer of 
a convex functional in a linear space is unique. 
Therefore, the local best approximations given in Theorem~\ref{th:engRel} is optimal. 
However, when the initial value
is in the domain of attraction of a global minimizer of $\E(\cdot)$ in $V_h^N$, one can approximate the global minimizer 
by the MFEM.
The following theorem shows that the global minimizer has an optimal convergence order.

\begin{theorem}\label{theo:main}
Let $u_h^{\infty}$  be a global minimizer of $\E(\cdot)$ in $V_h^N$. Assume that $f(x,u)$ is differentiable with respect to $u$.
 If $u^\infty \in B^{s+1}_q(L_q(I))$ with $0\leq s\leq 1$ and $1/q<s+1/2$,
then we have
%
\begin{equation}
 |u^{\infty}-u_h^{\infty}|_{H^1}\lesssim N^{-s} |u^\infty|_{ B^{s+1}_q(L_q(I))}.
 \end{equation}
\end{theorem}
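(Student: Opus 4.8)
The plan is to run a C\'ea-type best-approximation argument adapted to the nonlinear energy minimization, in which the coercivity needed to pass from the energy gap to the $H^1$ error is supplied by assumption (H1). The central quantity is the energy increment $\E(u^\infty+w)-\E(u^\infty)$ for $w\in H^1_0(I)$, and the whole proof reduces to showing that this increment is two-sided comparable to $|w|_{H^1}^2$, after which the global minimality of $u_h^\infty$ and the nonlinear approximation estimate of Lemma~\ref{lem:approx} close the argument.

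First I would exploit that $u^\infty$ is a critical point of $\E$: testing the Euler--Lagrange equation~\eqref{e:EL_S} with $v=w$ shows that the first-order term in the expansion of $\E(u^\infty+w)$ vanishes. Writing $F(x,u^\infty+w)-F(x,u^\infty)=\int_0^1 f(x,u^\infty+\theta w)\,w\,d\theta$ and subtracting the linear contribution $f(x,u^\infty)w$, the remainder is $\int_0^1[f(x,u^\infty+\theta w)-f(x,u^\infty)]\,w\,d\theta$, which by the Lipschitz bound~\eqref{e:Lip} is controlled in absolute value by $\tfrac{L_0}{2}w^2$. Hence, after integrating over $I$ and applying the Poincar\'e inequality~\eqref{e:Sob}, I obtain
$$
\Big|\,\E(u^\infty+w)-\E(u^\infty)-\tfrac{\alpha}{2}|w|_{H^1}^2\,\Big|\le \tfrac{L_0 c_0^2}{2}|w|_{H^1}^2 .
$$
The lower bound combined with (H1) yields the coercivity $\E(u^\infty+w)-\E(u^\infty)\ge \tfrac{c_1}{2}|w|_{H^1}^2$, while the upper bound gives $\E(u^\infty+w)-\E(u^\infty)\le \tfrac{\alpha+L_0 c_0^2}{2}|w|_{H^1}^2$.

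Next I would bring in the two optimality facts. Let $v_h^\ast\in V_h^N$ be a near-best free-knot approximant of $u^\infty$, so that $|u^\infty-v_h^\ast|_{H^1}$ realizes the infimum $\sigma_N(u^\infty)$ (up to an arbitrarily small slack if it is not attained). Applying the upper energy bound with $w=v_h^\ast-u^\infty$ gives $\E(v_h^\ast)-\E(u^\infty)\le \tfrac{\alpha+L_0 c_0^2}{2}\,\sigma_N(u^\infty)^2$, and since $u_h^\infty$ is a global minimizer of $\E$ over $V_h^N$ we have $\E(u_h^\infty)\le\E(v_h^\ast)$. Chaining these with the coercivity applied to $w=u_h^\infty-u^\infty$,
$$
\tfrac{c_1}{2}|u^\infty-u_h^\infty|_{H^1}^2\le \E(u_h^\infty)-\E(u^\infty)\le \E(v_h^\ast)-\E(u^\infty)\le \tfrac{\alpha+L_0 c_0^2}{2}\,\sigma_N(u^\infty)^2 .
$$
Taking square roots and invoking Lemma~\ref{lem:approx} with the stated regularity $u^\infty\in B^{s+1}_q(L_q(I))$ delivers the claimed rate $|u^\infty-u_h^\infty|_{H^1}\lesssim N^{-s}|u^\infty|_{B^{s+1}_q(L_q(I))}$.

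The main obstacle is the two-sided energy equivalence: everything hinges on showing that the energy gap is uniformly comparable to $|w|_{H^1}^2$, for which the criticality of $u^\infty$ (to eliminate the first-order term) and assumption (H1) (to keep the quadratic form positive despite the indefinite reaction contribution) are both essential. I note that the differentiability of $f$ assumed in the statement is convenient but not strictly necessary here, since the Lipschitz bound~\eqref{e:Lip} already controls the remainder. A secondary subtlety is that $V_h^N$ is a nonconvex manifold, so no linear projection and no Galerkin orthogonality are available; this is precisely why the argument must lean on the global minimality of $u_h^\infty$, and why the theorem is restricted to the global minimizer rather than an arbitrary critical point.
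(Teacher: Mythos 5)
Your proposal is correct and follows essentially the same route as the paper: expand the energy gap around the critical point $u^\infty$, use the Euler--Lagrange equation \eqref{e:EL_S} to kill the first-order term, obtain coercivity from (H1) and the Poincar\'e inequality, and close via global minimality of $u_h^\infty$ together with Lemma~\ref{lem:approx}. The only (minor, and valid) deviation is that you use the integral form of the Taylor remainder controlled directly by the Lipschitz bound \eqref{e:Lip}, whereas the paper uses a Lagrange-form remainder involving $\partial_u f$; your observation that the differentiability hypothesis on $f$ is thereby dispensable is accurate.
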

\begin{proof}
Denote $w= u_h^{\infty}-u^{\infty}$. Since $u_h^\infty\in V_h^N\subset H^1_0(I)$ and $u^{\infty}$ is the minimizer of $E(u)$ in $H^1_0(I)$,  then we have
$w \in H^1_0(I)$ and
\begin{align}
&\E(u_h^{\infty})-\E(u^{\infty}) \nonumber\\
&=\int_{I}\Big(\frac{\alpha}{2}(\partial_x u_h^\infty )^2-\frac{\alpha}{2}(\partial_x u^\infty)^2\Big)+\Big(F(x,u_h^{\infty})  
-  F(x,u^{\infty})\Big)\dx\nonumber\\
&=\int_{I}\frac{\alpha}{2}(\partial_x w)^2+\alpha\partial_x u^\infty \partial_x w+
 f(x,u^{\infty}) w +\frac{1}{2}\partial_{u}f(x,u^\infty+s w)w^2 \dx\nonumber\\
&=\int_{I}[\alpha\partial_x u^\infty \partial_x w+
 f(x,u^\infty) w ]+\frac{\alpha}{2}(\partial_x w)^2+\frac{1}{2}\partial_{u}f(x,u^\infty+s w)w^2 \dx\nonumber\\
&=\int_{I}\frac{\alpha}{2}(\partial_x w)^2+\frac{1}{2}\partial_{u}f(x,u^\infty+s w)w^2 \dx,\label{e:temp2}
\end{align}
where $0\leq s\leq 1$ and we have used the equation~\eqref{e:EL_S} in the last equation. From the Lipschitz condition~\eqref{e:Lip} of $f$,
we know that $|\partial_{u}f(x, u)|\leq L_0$. This further leads to
\begin{align}
\E(u_h^{\infty})-\E(u^{\infty})&\geq \int_{I}\frac{\alpha}{2}(\partial_x w)^2\dx -\frac{L_0}{2}\int_I w^2 \dx \nonumber \\
&\geq \frac{\alpha-L_0 c_0^2}{2} \int_I (\partial_x w)^2\dx\nonumber \\
&\geq \frac{c_1}{2} \int_I (\partial_x w)^2\dx=\frac{c_1}{2}|\partial_x  u^{\infty}-\partial_x u^{\infty}_h |^2_{H^1}.\label{e:temp}
\end{align}
Here we have used  assumption (H1) and the Poincare inequality~\eqref{e:Sob}.

For any $v_h\in V_h^N$,  we denote $\tilde{w}=v_h-u^\infty$. Noticing that $u_h^{\infty}$ is a global minimizer 
of $\E$, similar calculations as \eqref{e:temp2} gives 
\begin{align*}
\E(u_h^{\infty})-\E(u^{\infty})&\leq  \E(v_h)-\E(u^{\infty}) \\
&=\int_{I}\frac{\alpha}{2}(\partial_x \tilde{w})^2+\frac{1}{2}\partial_{u}f(x,u^\infty+s\tilde{w})\tilde{w}^2 \dx\\
&\lesssim  \|u^{\infty}-v_h\|_{H^1}^2,\qquad \forall v_h\in V_h^N.
\end{align*}
Combine  the inequality with \eqref{e:temp} and using the Poincare inequality again, we obtain
\begin{equation}
 |u^{\infty}-u_h^{\infty}|_{H^1}^2\lesssim \E(u_h^{\infty})-\E(u^{\infty})\lesssim \inf_{v_h\in V_h^N}\|u^{\infty}-v_h\|_{H^1}^2 \lesssim (\sigma_N(u))^2.
\end{equation}
By Lemma~\ref{lem:approx}, we have finished the proof.


\end{proof}
In general it is not clear how to determine the domain of  attraction of a global minimizer.
There is no guarantee that  all initial values can tend to a  globally stationary solution.
As suggested by the numerical results in \cite{jimack1996best}, it is possible that the 
solution of a MFEM does not go to the global minimizer when some elements in the partition
tend to degenerate. 

\section{Numerical Examples}
As we mentioned in Section 3, the equation~\eqref{e:ODE} is a stiff ODE system.
One reason is that some mesh size in the MFEM can be very small. 
In \cite{miller1981movingB}, a technique by adding ``internodal spring force" was
 developed to keep the grid points at least slightly separated.  
Here we use a  similar technique by adding a penalty term to the total energy. 
We replace $\E(v_h)$ by 
\begin{equation}\label{e:modifyEng}
\E_{\tilde{\delta}} (v_h)=\E(v_h)+\frac{\tilde{\delta}}{N}\sum_{k=0}^N(\ln(N h_k/L))^2.
\end{equation}
where $h_k=x_{k+1}-x_k$ and $\tilde{\delta}$ is a small parameter. 
The penalty term goes to infinity when $h_k$ goes to zero. 

In the following, we show some numerical examples. In our simulations, we solve the problem \eqref{e:ODE} by a simple 
forward Euler scheme with very small time step. More efficient
solvers(e.g. some implicit stiff ODE solver) have been studied in \cite{miller1981movingB}. 
\subsection{A linear diffusion equation}
We first consider a linear diffusion problem:
\begin{equation}\label{e:lineardiffusion}
\left\{
\begin{array}{ll}
 \partial_t u-\partial_{xx} u=f(x), & x\in (a,b),t>0;\\
 u(x,0)=u_0(x),& x\in (a,b);\\
 u(a,t)=u(b,t)=0 & t>0.
\end{array}
\right.
\end{equation}
This corresponding energy density is given by
\begin{equation}\label{e:energydiffusion}
\E(u)=\int_{I}\frac{1}{2}|\partial_x u|^2 -f(x)u(x) \dx,
\end{equation}
where $I=(a,b)$.
We choose $\delta=0.0001$ and $\tilde{\delta}=0.01$. We test for  several different $u_0$ and $f$. 


{\bf Example 1.} In the first example, we set $I=(0,1)$ and $f(x)=\delta_{a_0}(x)$ 
which is a Dirac measure at the middle point $a_0=0.5$. 
We have $\int_{0}^1 f(x)v(x)\dx=v(x_0)$, for any $v\in C_0^1([0,1])$.
The initial solution is given by $u_0= \sin(\pi x)+x\chi_{\{x<x_0\}}+(1-x)\chi_{\{x>x_0\}}$.
In this case the equation~\eqref{e:lineardiffusion} has an analytic solution 
$$u(x,t)=\sin(\pi x)e^{-\pi^2 t}+x\chi_{\{x<x_0\}}+(1-x)\chi_{\{x>x_0\}}.$$
The spacial derivative of the  solution has a jump  at the middle point $a_0=0.5$. 

The initial partition  of $I$ is uniform and given by 
$0=x_0<\frac{1}{N}<\cdots<\frac{N-1}{N}<x_N=1$. 
Here we choose $N=5,9,19,39,79$ so that the non-smooth point $a_0$ is
not a node in the initial partition.
 We solve the problem \eqref{e:ODE} until $T=0.04$. 
 The $H^1$ error and the $L^2$ error between the discrete solution $u_h$ and the exact solution $u$ at $T$
 are computed by
\begin{align*}
&err_{H^1}:=\Big(\int_I(\partial_x u(x,T)-\partial_x u_h(x,T))^2 \dx\Big)^{1/2},\\
 &err_{L^2}:=\Big(\int_I( u(x,T)- u_h(x,T))^2 \dx\Big)^{1/2}.
\end{align*}

\begin{table}[h]\small
\caption{\small The $H^1$-norm and $L^2$-norm of the error in Experiment~1.}\label{tab:ErrorEx1}
\vspace{-0.2cm}
\begin{center}
\begin{tabular}{l|lc|lc}
\hline
{\bf Adaptive} & $err_{H^1}$  & order &  $err_{L^2}$   & order \\
\hline
$N=5$&0.3326 & --  &0.0338 &-- \\
$N=9$ &0.1571& 1.28 & 0.0098&2.11\\
$N=19$ &0.0842 &0.83& 0.0022 &2.00 \\
$N=39$&0.0470& 0.81&0.000523&1.99 \\
$N=79$&0.0265& 0.81&0.000141& 1.86 \\
\hline
\end{tabular}\\
\vspace{0.2cm}
\begin{tabular}{l|lc|lc}
\hline
{\bf Uniform} &  $err_{H^1}$  & order &  $err_{L^2}$   & order  \\
\hline
$N=5$&0.6363 & --  &0.0435 &-- \\
$N=9$ &0.1571& 0.72 & 0.0153&1.77 \\
$N=19$ &0.2587 &0.64& 0.0043 &1.70 \\
$N=39$&0.1703& 0.58&0.0013&1.66 \\
$N=79$&0.1159& 0.54&0.000438& 1.54 \\
\hline
\end{tabular}
\end{center}
\end{table}
In Table~\ref{tab:ErrorEx1}, we show the numerical errors for the different choice of $N$. The convergence order is computed 
by $s_i:=\ln(err_i/err_{i+1})/\ln(N_{i+1}/N_{i})$, which implies that 
the errors decrease with order $O(N^{-s})$. We can see that the $H^1$-error is almost of order $O(N^{-0.8})$, close to the optimal
convergence rate $O(N^{-1})$. Meanwhile, the $L^2$-error is of order $O(N^{-2})$, which is  optimal in  
approximation theory. In comparison, we also show the errors computed by the standard finite element method
 on a uniform partitions of $I$. We could see that the $H^1$ norm is almost of order $O(N^{-0.5})$ and
the $L^2$-error is of order $O(N^{-1.5})$. This is reasonable since the exact solution is not smooth.
In this case, the MFEM has better accuracy than the standard FEM. 

In Figure~\ref{fig:Exampl1} we illustrate the numerical solutions for the case $N=9$ at different time $t$.
We can see that initially the interval $I$ is uniformly divided into $N-1$ cells and $u_k$ are simply the interpolation of $u(0,x)$.
There is a large error at $a_0=0.5$ since the initial function is not smooth there. With time increasing,
we can see that the  notes $x_k$ move with time. In particular, the two notes near the non-smooth point $a_0$ approach
to the point gradually. The distance between them become smaller and smaller. At about $t=0.0048$, the right point almost arrives at
$a_0$. After that, the left point moves away from the point to further decrease the computational error.
 \begin{figure}[ht!]
 \centering
  \subfigure[$t=0$]{
   \includegraphics[width=0.31\textwidth]{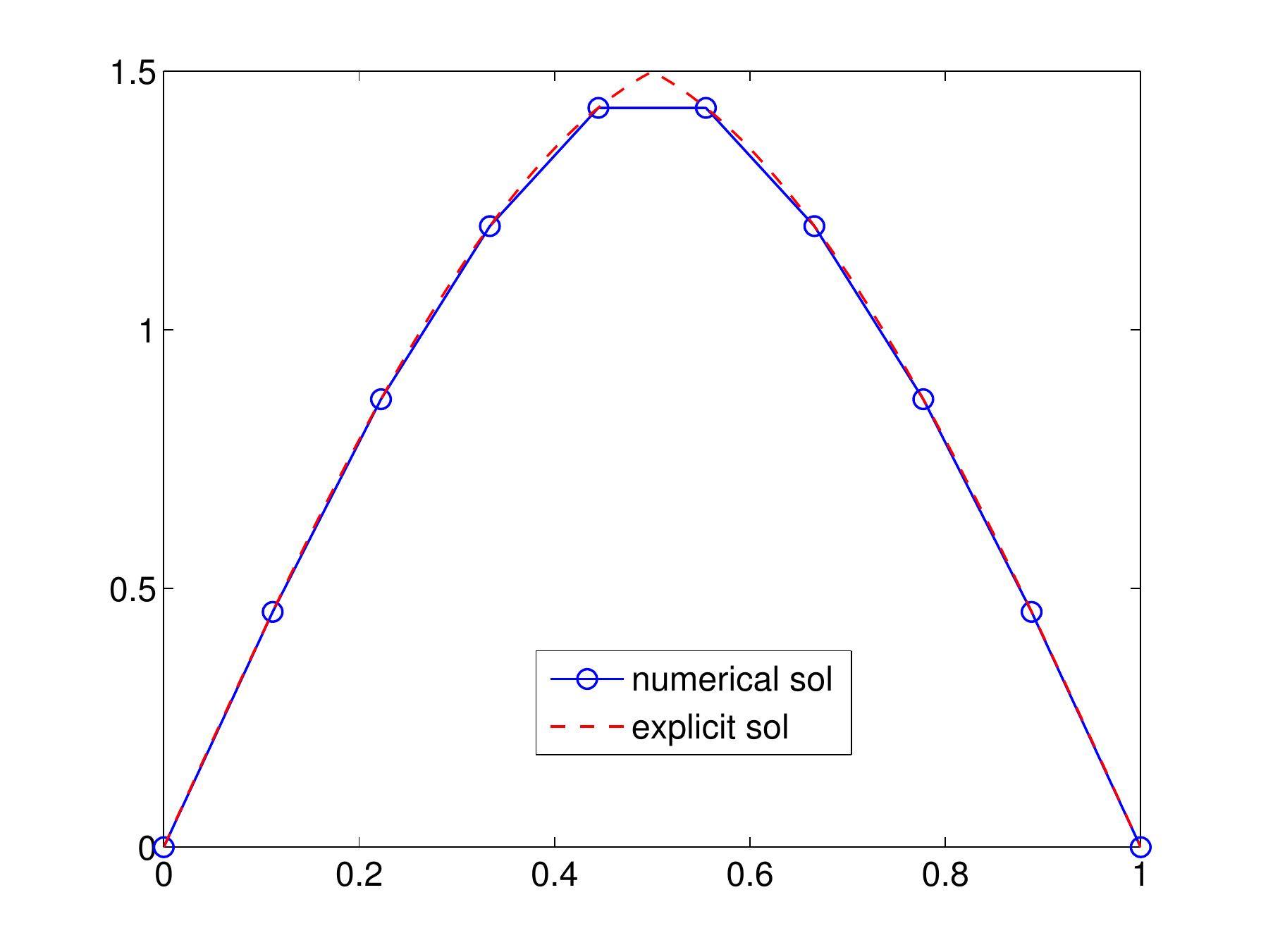}
   }
  \subfigure[$t=0.0012$]{
   \includegraphics[width=0.31\textwidth]{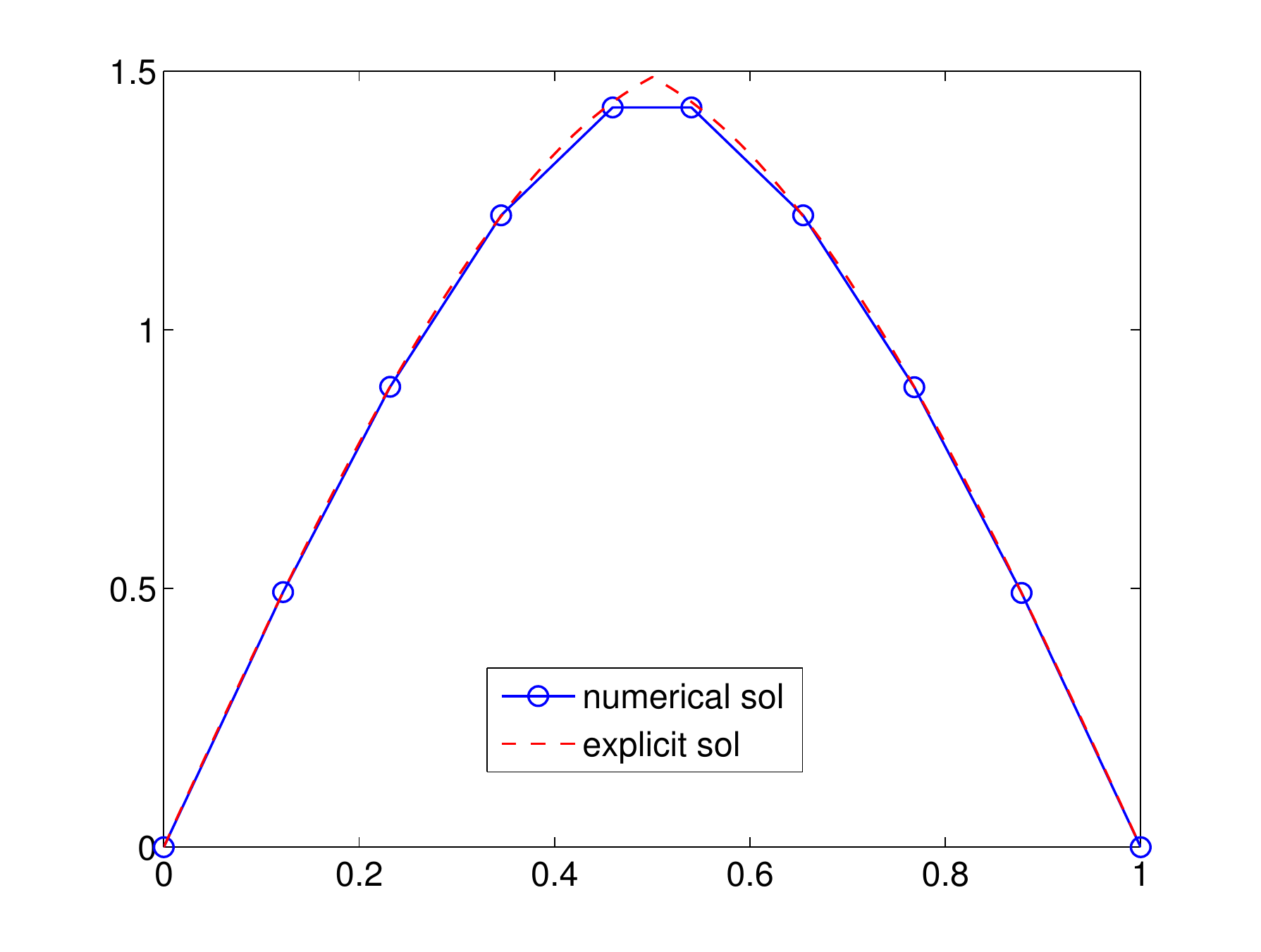}
}
  \subfigure[$t=0.0024$]{
   \includegraphics[width=0.31\textwidth]{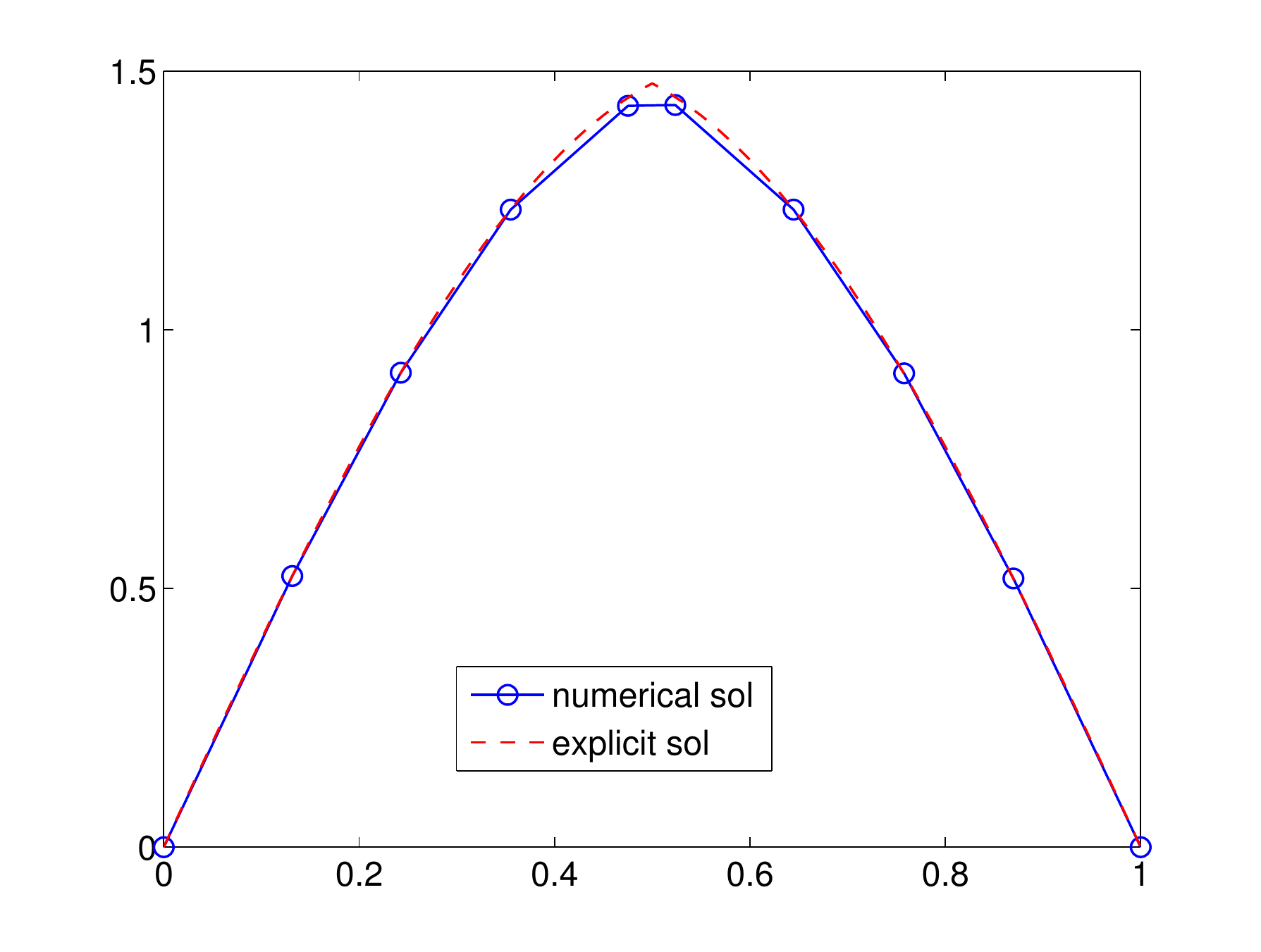}
}
  \subfigure[$t=0.0036$]{
   \includegraphics[width=0.31\textwidth]{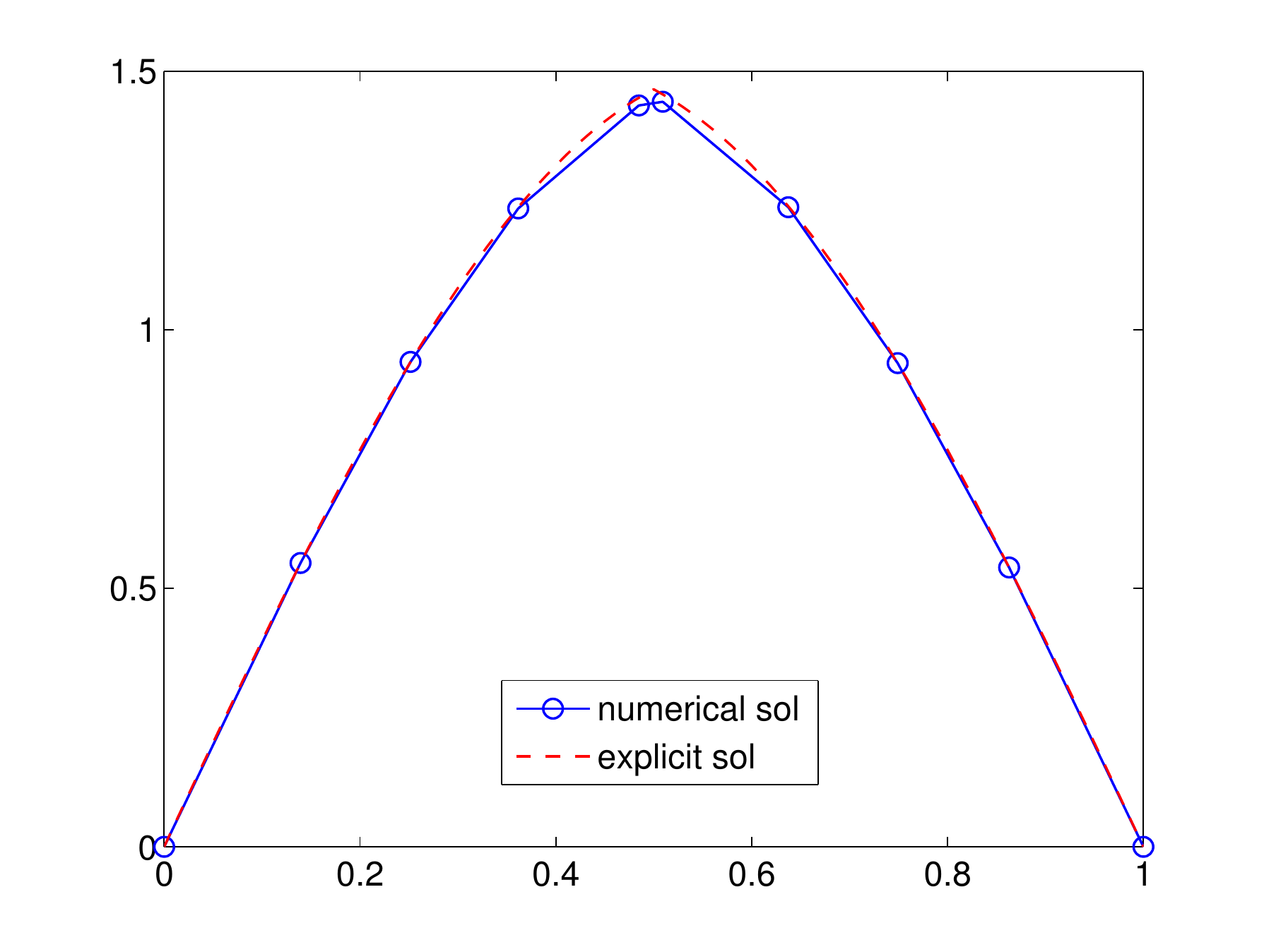}
}
  \subfigure[$t=0.0048$]{
   \includegraphics[width=0.31\textwidth]{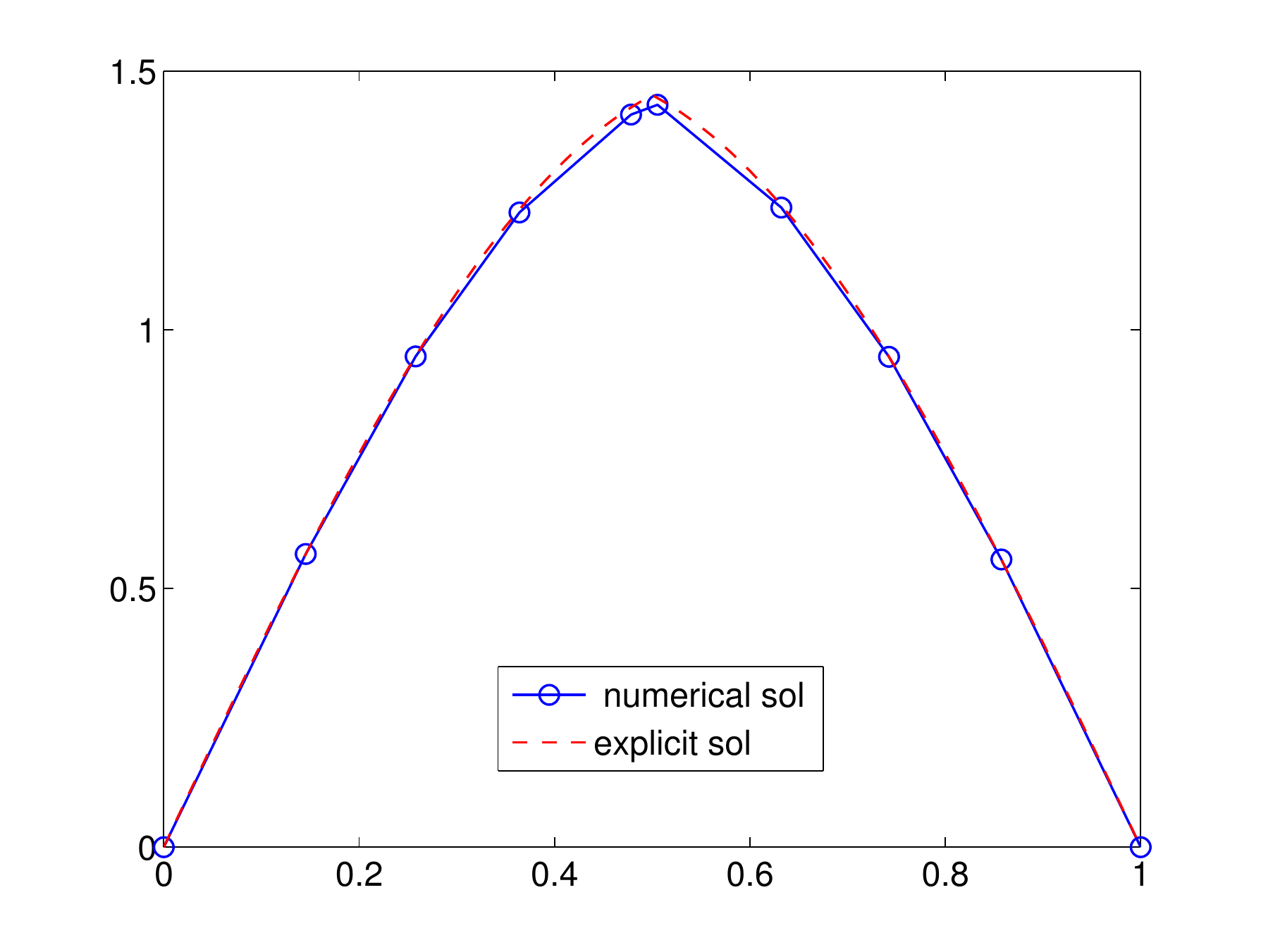}
}
  \subfigure[$t=0.006$]{
   \includegraphics[width=0.31\textwidth]{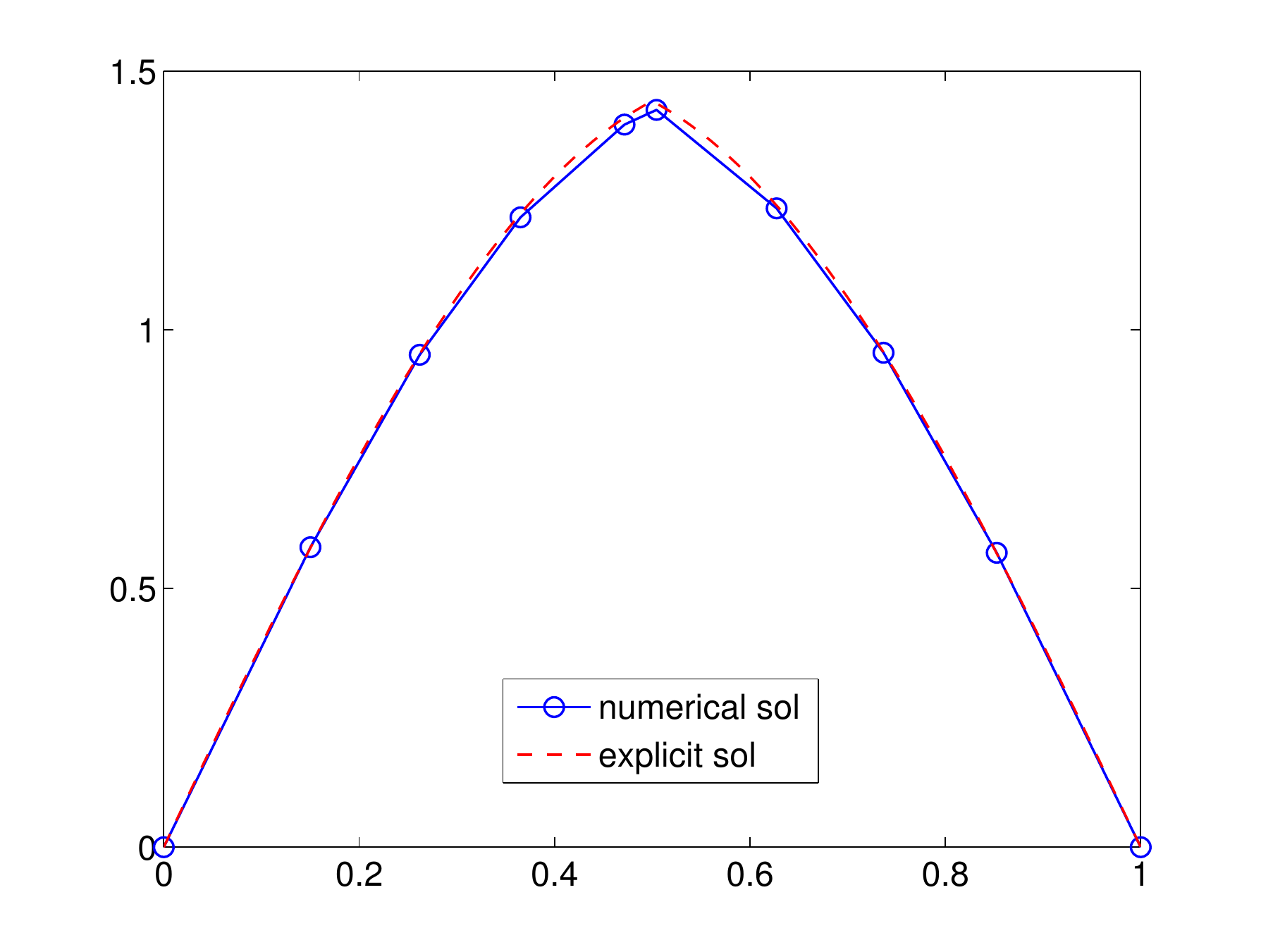}
}
  \subfigure[$t=0.009$]{
   \includegraphics[width=0.31\textwidth]{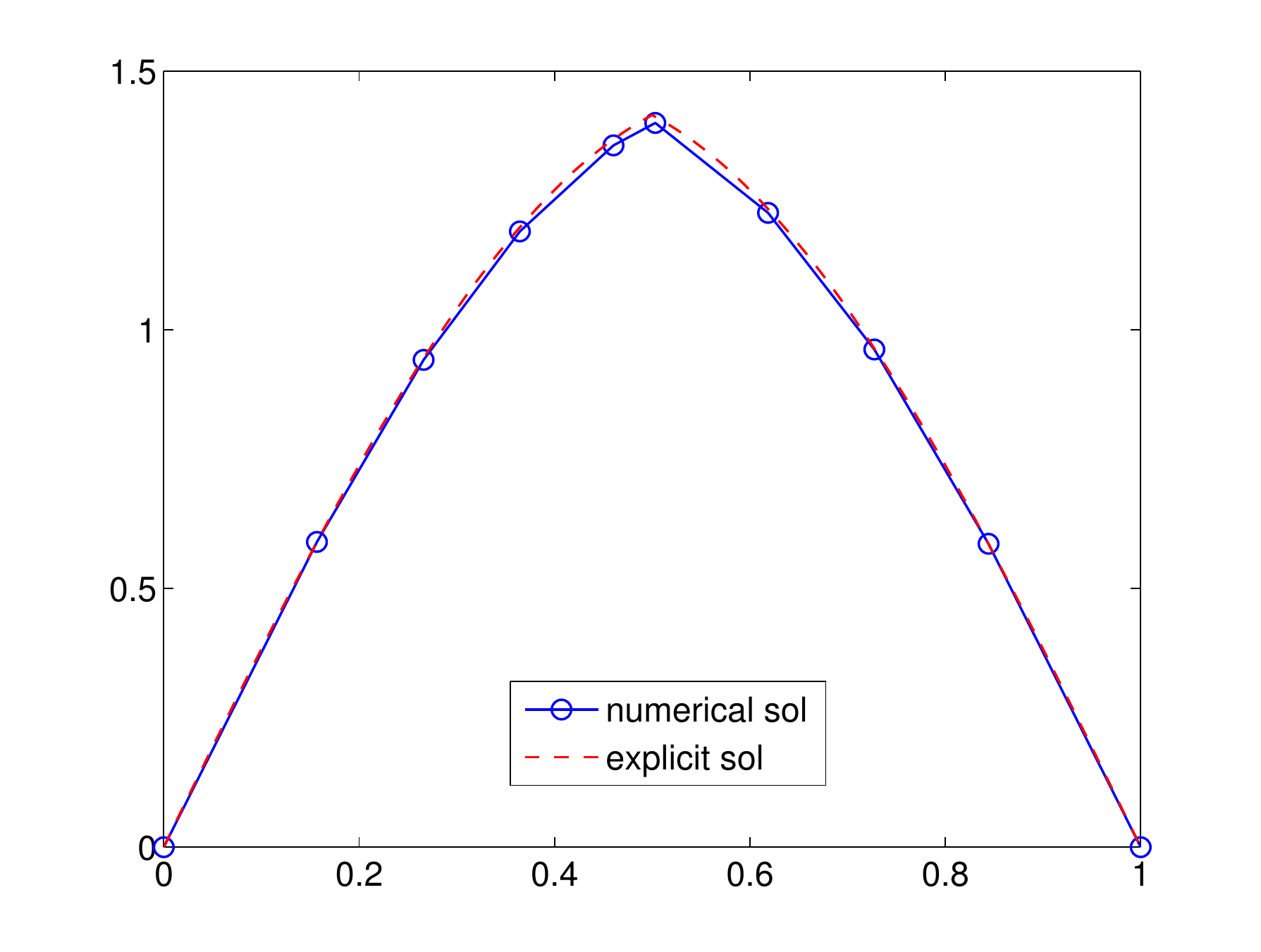}
}
  \subfigure[$t=0.014$]{
   \includegraphics[width=0.31\textwidth]{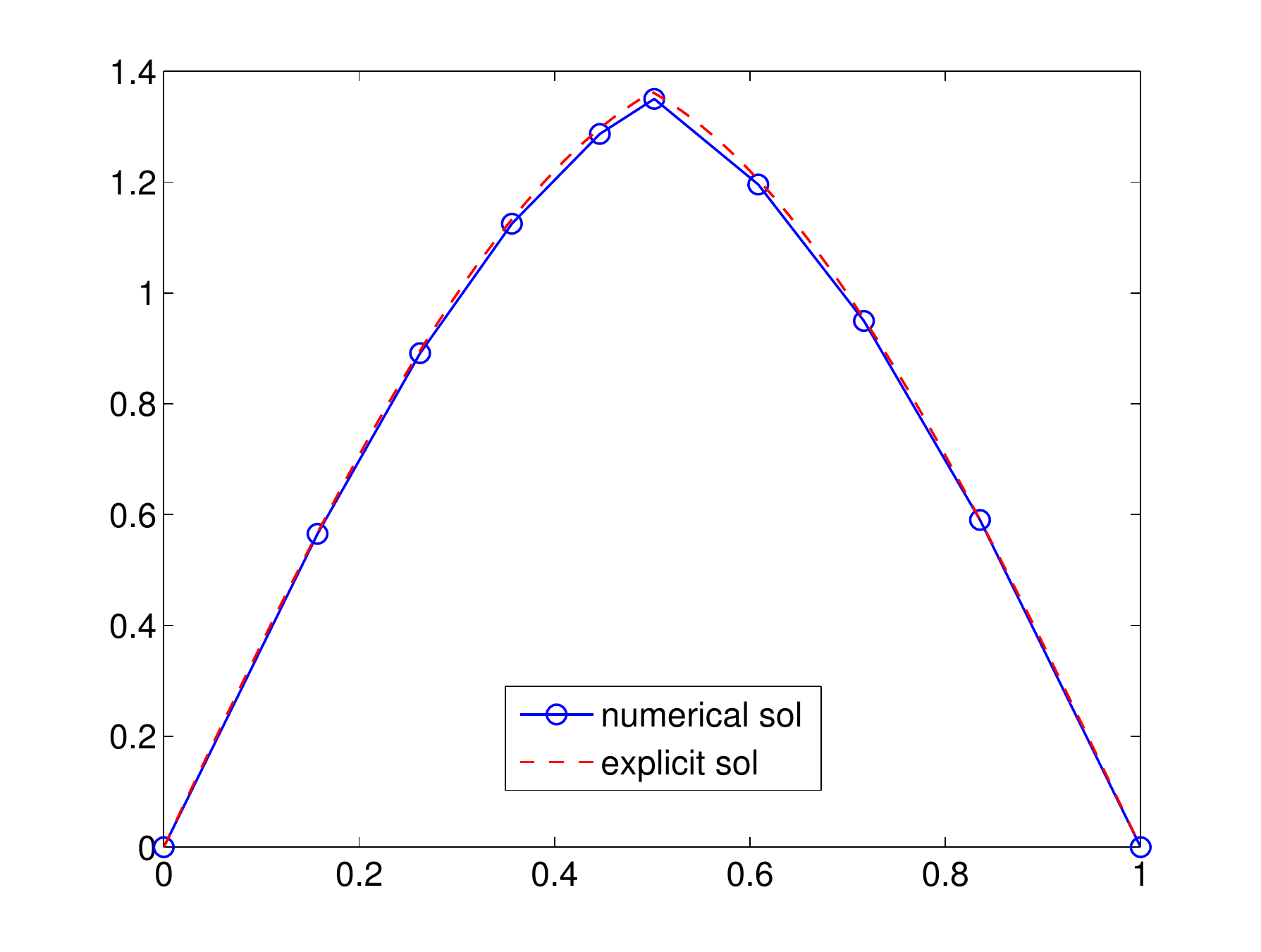}
}
  \subfigure[$t=0.04$]{
   \includegraphics[width=0.31\textwidth]{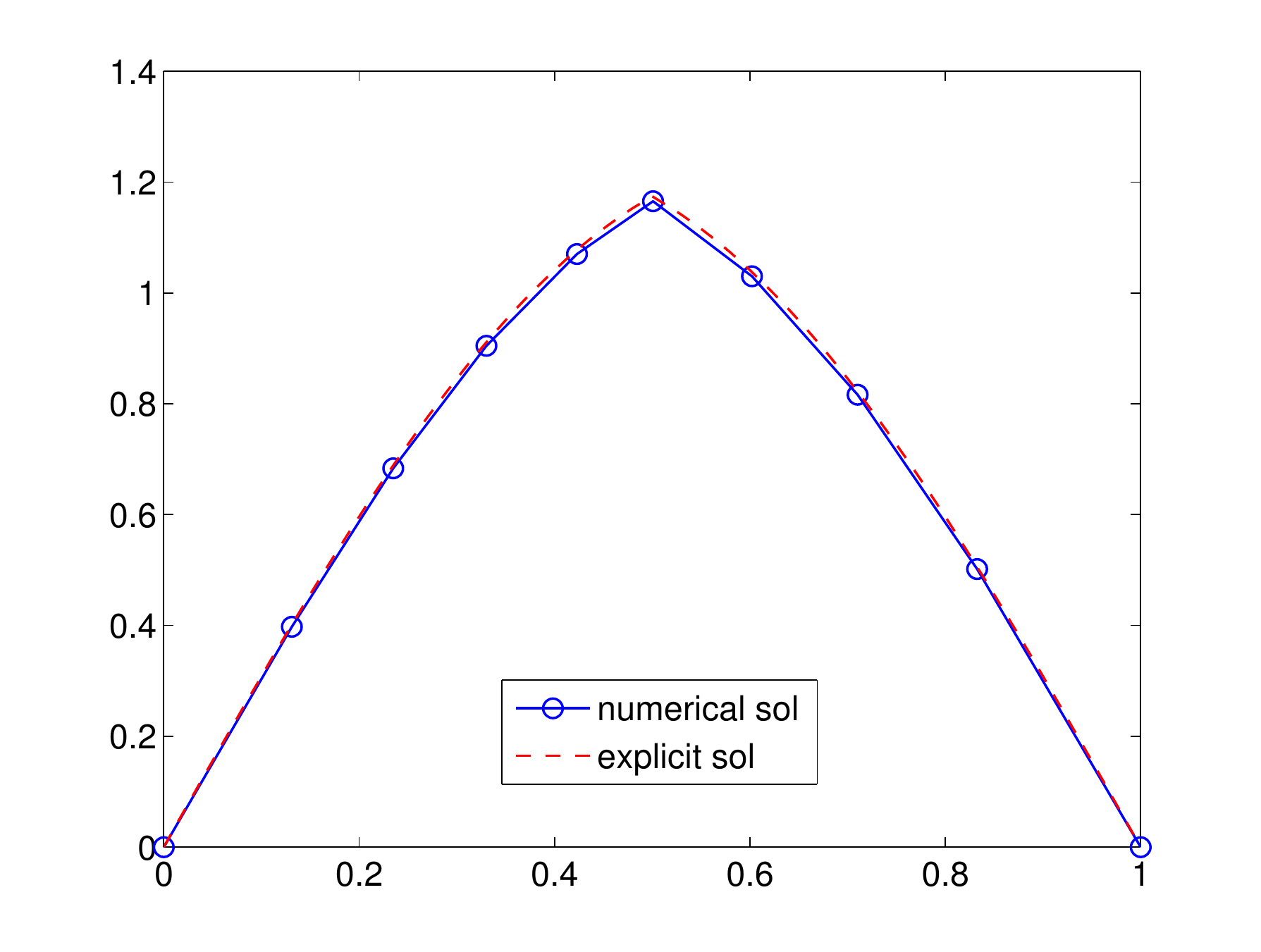}
}
 \caption{The profile of the numerical solution and the exact solution at different time $t$ when $N=9$ for Example 1.
 The small circles in the figures show the computational solutions $u_k$ corresponding to $x_k$, for $k=0,\cdots,9$.
 }
 \label{fig:Exampl1}
\end{figure}

{\bf Example 2.} To further illustrate the behaviour of the MFEM, we consider a diffusion problem with 
an almost singular initial condition. We set $f=0$ and $I=(-3,3)$.
 We choose $u_0=\frac{1}{\sqrt{0.004\pi}}(e^{-x^2/(0.004)}-e^{-9/(0.004)})$, which is a very sharp 
Gaussian function as shown in the first sub-figure in Figure~\ref{fig:Example2}.  $u_0$ is almost equal to
zero outside of a narrow interval. For this problem, we do not have an explicit formula for the analytic solution.
 
Initially we choose a grid whose points concentrated in a small interval $(-0.2,0.2)$ to resolve the profile of $u_0$:
\begin{equation*}
X(0)=\{-3,-0.2,\cdots,-0.2+i\frac{0.4}{N-2},\cdots,0.2,3\}.
\end{equation*}

In Figure~\ref{fig:Example2}, we show the numerical solution when $N=19$. 
We can see that the numerical solutions spread outwards gradually while 
the height of the profile decreases.
In particular, the grid points gradually adjust their positions 
so that  the Gaussian profile of the numerical solution is always well resolved.
 \begin{figure}[ht!]
 \centering
  \subfigure[$t=0$]{
   \includegraphics[width=0.45\textwidth]{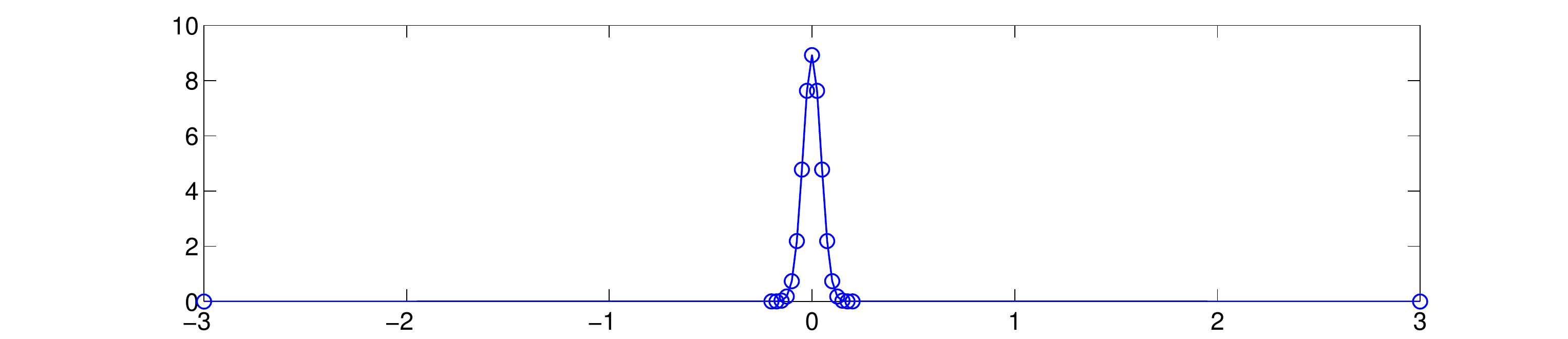}
   }
  \subfigure[$t=0.001$]{
   \includegraphics[width=0.45\textwidth]{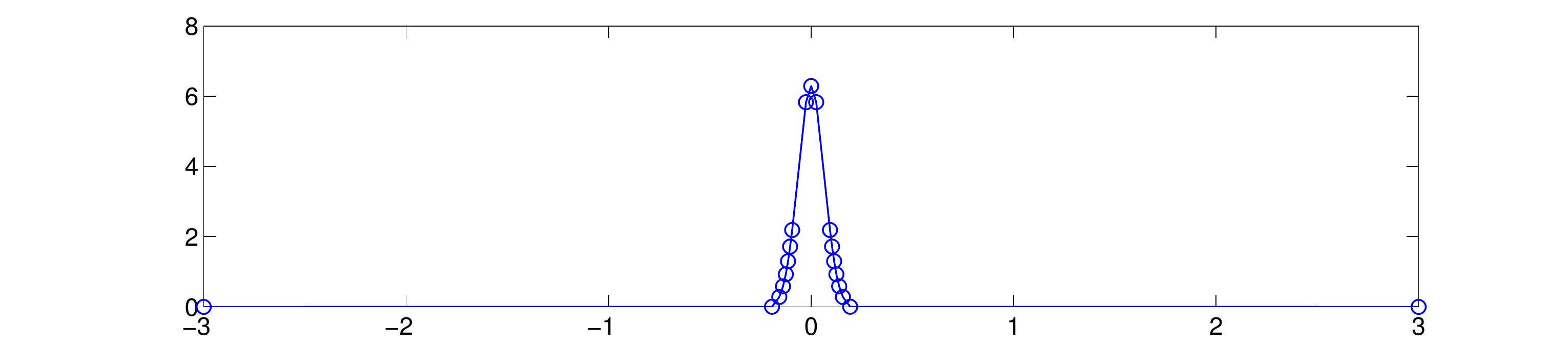}
}
  \subfigure[$t=0.005$]{
   \includegraphics[width=0.45\textwidth]{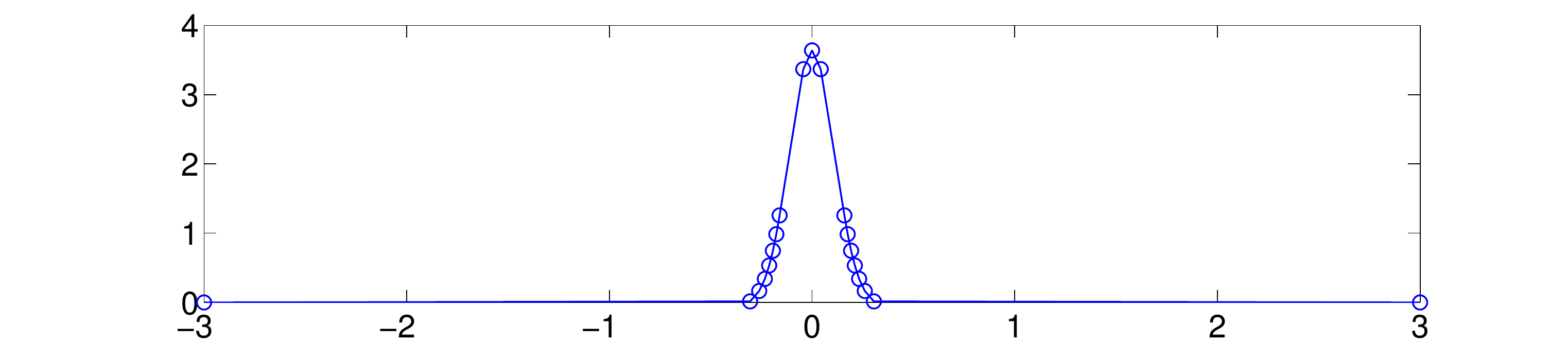}
}
  \subfigure[$t=0.02$]{
   \includegraphics[width=0.45\textwidth]{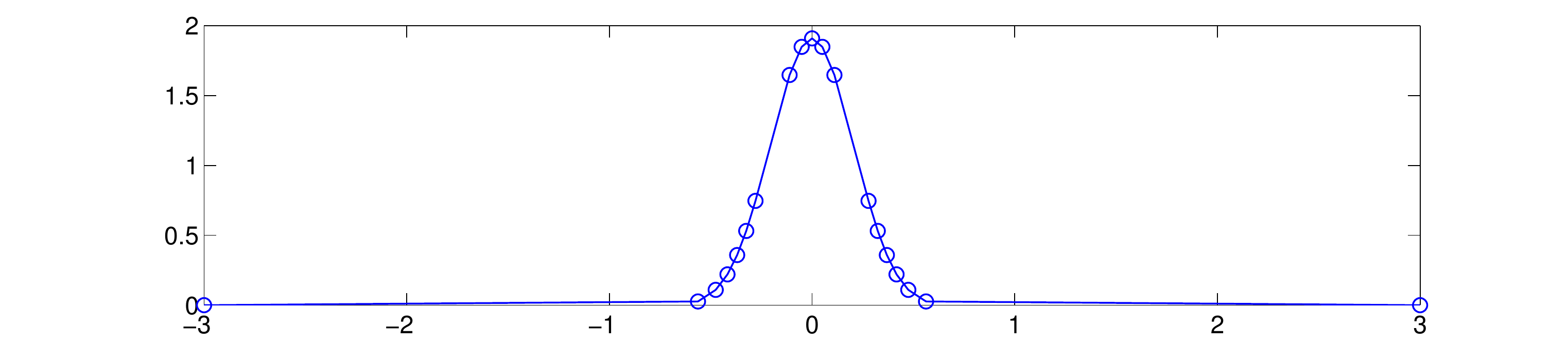}
}
  \subfigure[$t=0.1$]{
   \includegraphics[width=0.45\textwidth]{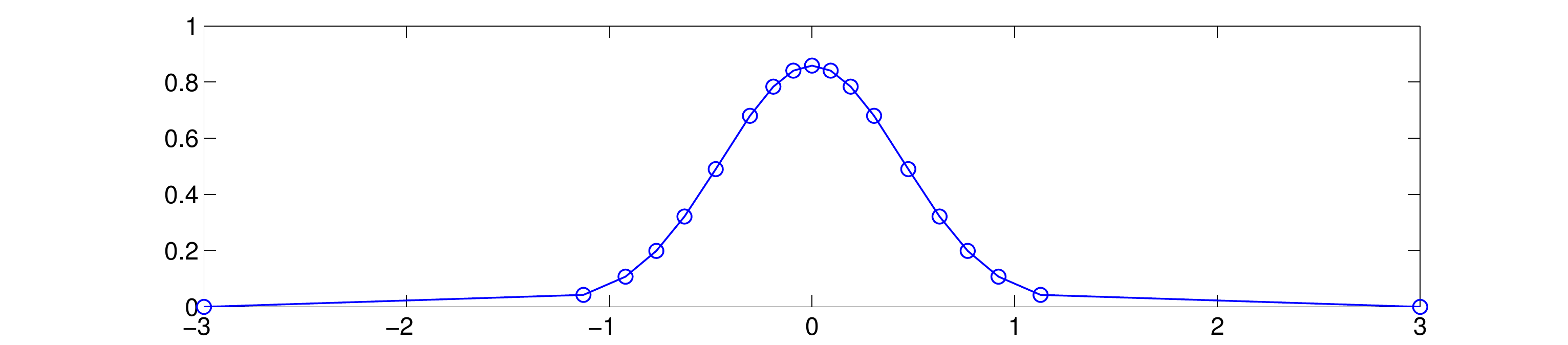}
}
  \subfigure[$t=0.2$]{
   \includegraphics[width=0.45\textwidth]{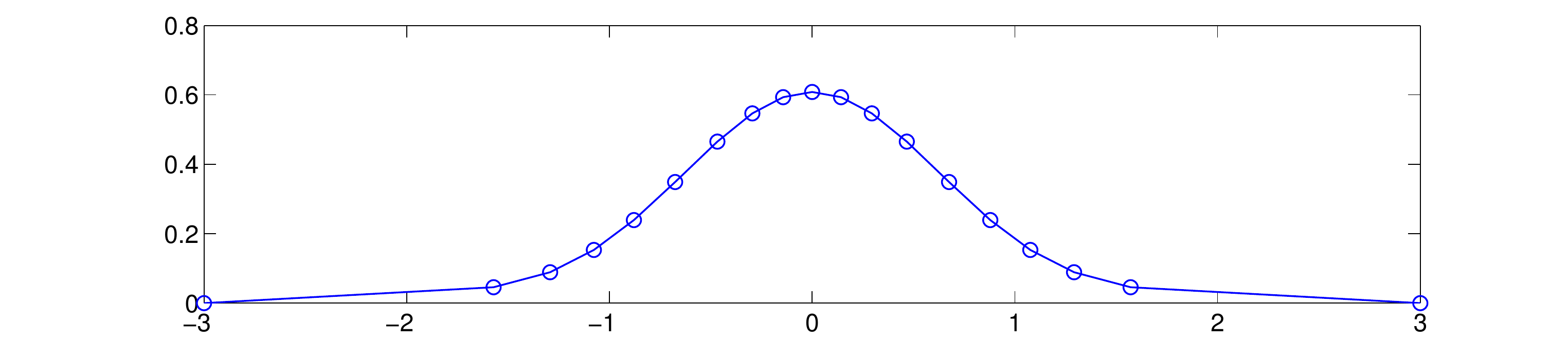}
}

 \caption{The profile of the numerical solution at different time $t$ when $N=19$ for Example 2.
 The small circles in the figures show the computational solutions $u_k$ corresponding to $x_k$, for $k=0,\cdots,19$.}
 \label{fig:Example2}
\end{figure}
\subsection{The Allen-Cahn equation}
We  then consider an Allen-Cahn equation in a interval $I=(0,1)$,
\begin{equation}\label{e:AC}
\left\{
\begin{array}{ll}
\partial_t \phi -\eps\partial_{xx} \phi +\frac{\phi^3-\phi}{\eps}=0,& x\in (0,1),\\
\phi(x,0)=x, & x\in (0,1),\\
\phi(0,t)=0, \phi(1,t)=1,
\end{array}
\right.
\end{equation}
 which is a nonlinear example for the model problem~\eqref{e:modelpb1}.
The corresponding free energy is 
\begin{equation}
\E(\phi)=\int_0^1\frac{\eps}{2}(\partial_x \phi)^2 +\frac{(1-\phi^2)^2}{4\eps} \dx.
\end{equation}
We are interested in the stationary profile of the problem. When $\eps$ is small enough, the  profile is approximated nicely by
\begin{equation}
\phi^{\infty}(x)=\tanh(\frac{x-0.5}{\sqrt{2}\eps}).
\end{equation} 
It has a very thin inner layer at the middle point $x=0.5$. 
In simulations, we choose $\delta=0.0001$, $\tilde{\delta}=0.0001$ and test for  different values of $\eps$.  
The initial condition is given by 
$$\phi(0,x)=2(x-0.5).$$

{\bf Example 3.} In this example, we set $\eps=0.05$ and test the method for different choice of $N$. 
Initially, we give a uniform partition of $I$ that
 $X(0)=\{0=x_0<\frac{1}{N}<\cdots<\frac{N-1}{N}<x_N=1\}$,
We solve the problem for various choices of $N=5,10,20,40,80$. 

In the computation, the profile of $\phi_h$ changes gradually to a stationary state $\phi_h^\infty$.
Meanwhile the grid points move
into a highly non-uniform distribution to resolve the inner layers of the solution.
We illustrate some plots of the final solution $\phi_h^{\infty}$ as well as the distributions of $x_k$ in Figure~\ref{fig:Example3}.
They are obtained by solving the problem \eqref{e:ODE} until the decrease of the discrete energy
 in one step is smaller than a tolerance $TOL=1e^{-10}$. 
The profile of the ``exact" solution $\phi^{\infty}$ is also shown.
 We can see that the numerical solution fits the exact solution very well 
 even on a very coarse mesh($N=5$).
Almost all the grid points are distributed in the inner layer of the solution.
Interestingly,  it seems that the nodes are dense where the second order derivative 
of the solution is  large.  This indicates a uniform distribution of the errors in $H^1$ norm.

 \begin{figure}[ht!]\label{fig:Example3}
 \centering
  \subfigure[$N=5$]{
   \includegraphics[width=0.22\textwidth]{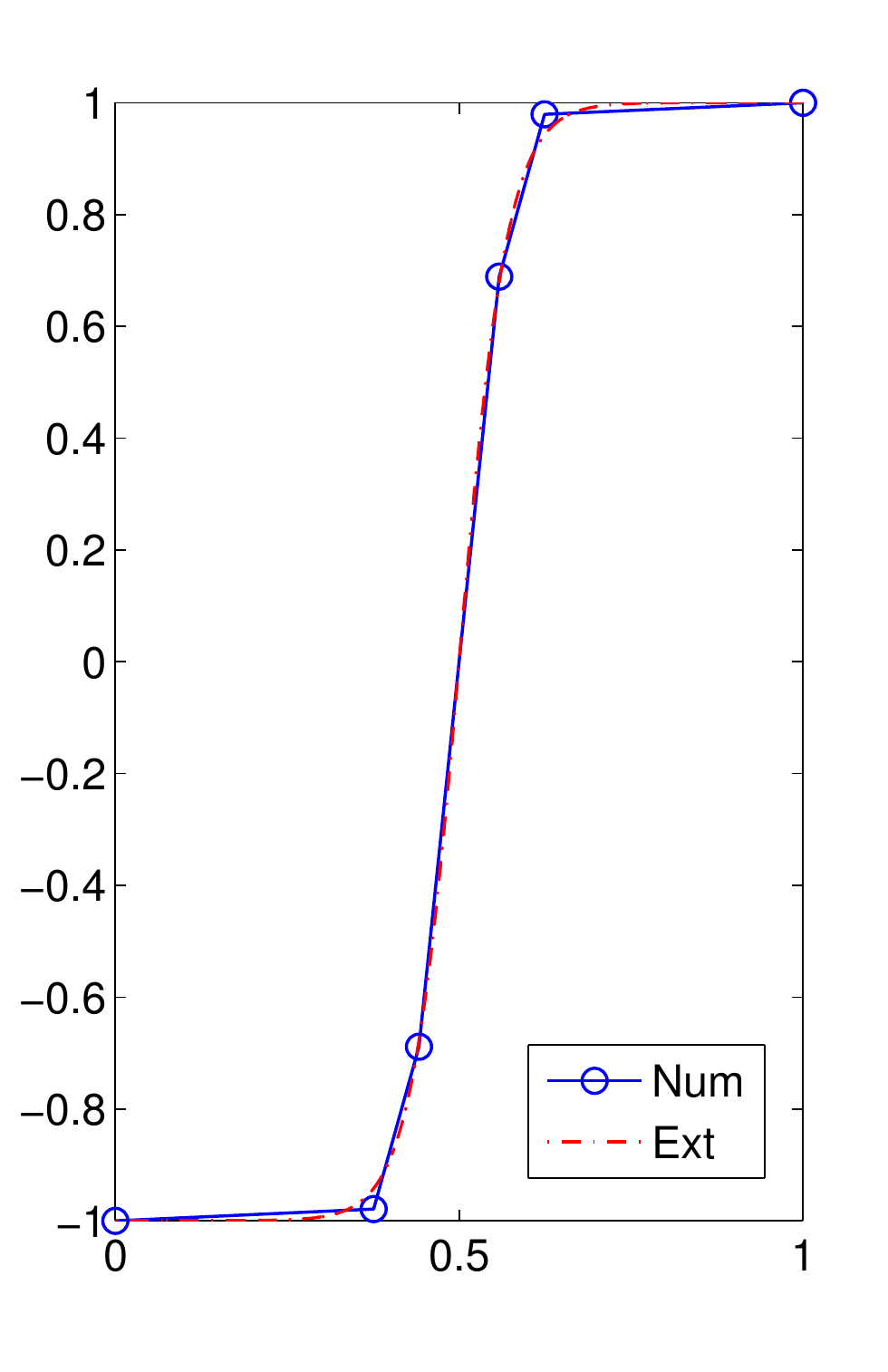}
}
  \subfigure[$N=10$]{
   \includegraphics[width=0.22\textwidth]{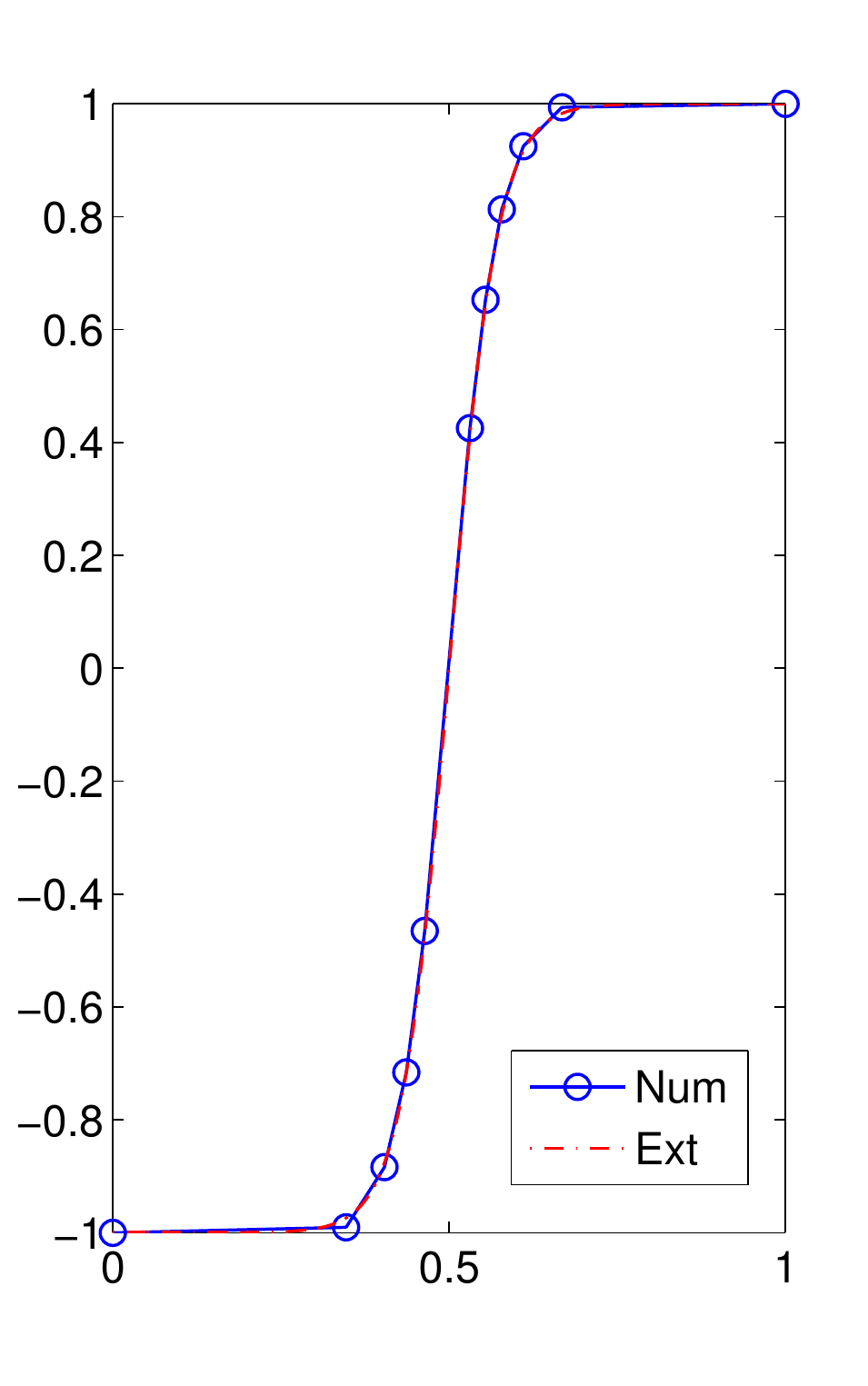}
}
  \subfigure[$N=20$]{
   \includegraphics[width=0.22\textwidth]{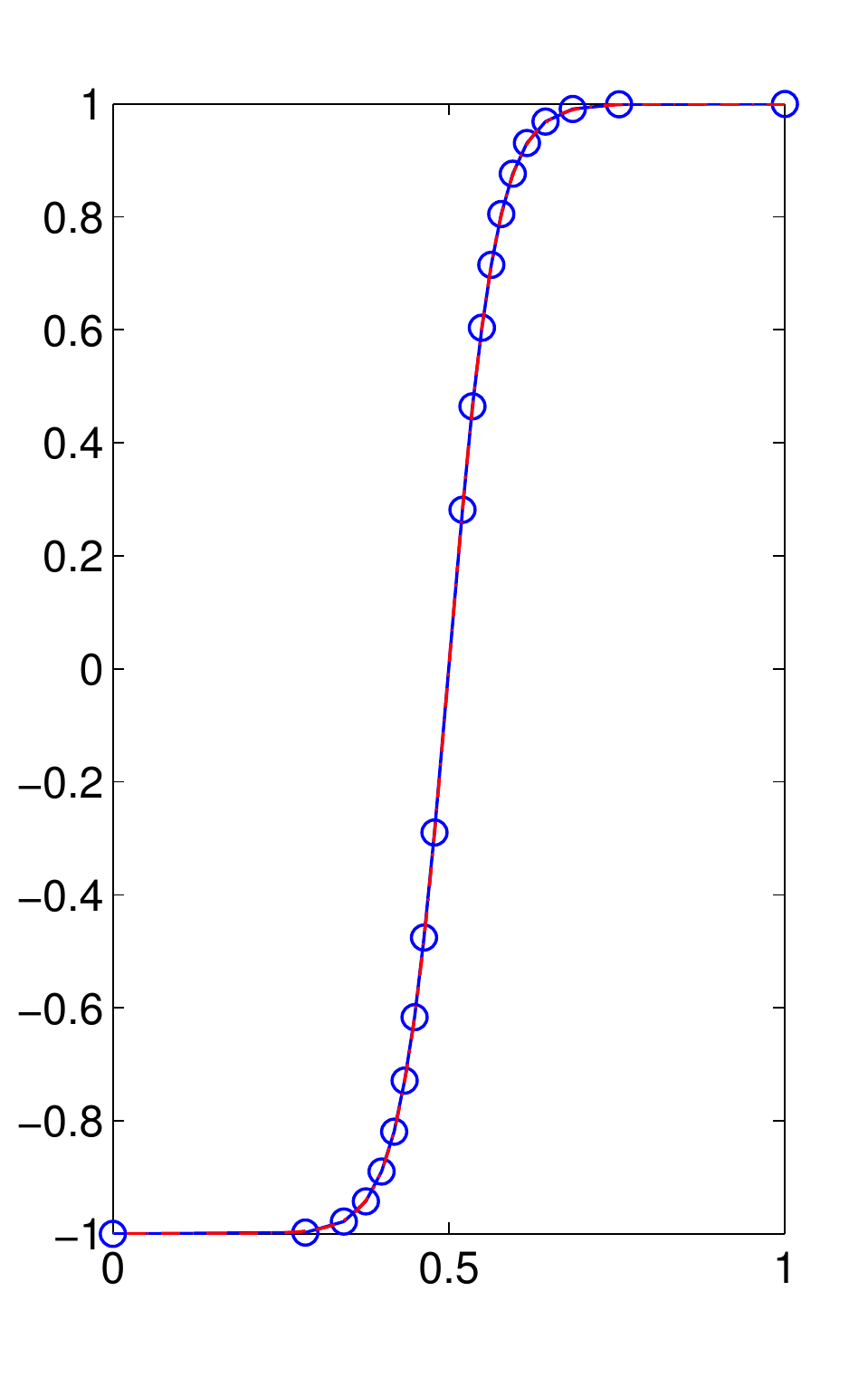}
}
  \subfigure[$N=40$]{
   \includegraphics[width=0.22\textwidth]{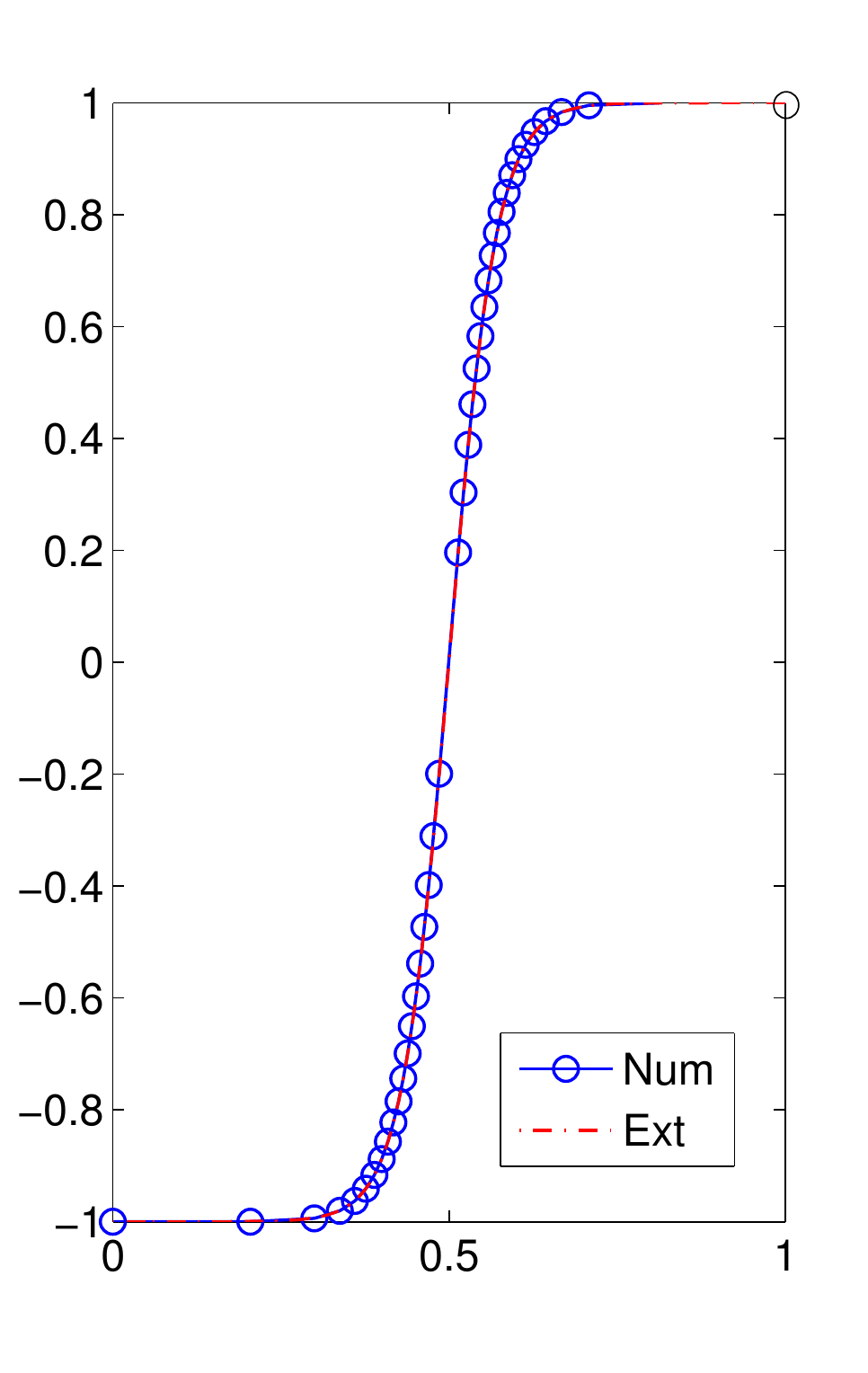}
}
 \caption{The numerical solutions of the Allen-Chan equation in stationary sate with $\eps=0.05$
 for various $N=5,10,20,40$.   The small circles in the figures mark the points $(x_k,\phi_{h,k})$.
 }
\end{figure}

In Table~\ref{tab:ErrorEx3}, we illustrate the $H^1$~error between $\phi_h^\infty$ and $\phi^{\infty}$:
\begin{equation*}
err_{H^1}:=\Big(\int_I(\partial_x \phi^\infty(x)-\partial_x \phi_h^{\infty}(x))^2 \dx\Big)^{1/2}.
\end{equation*}
It seems that the $H^1$~errors decrease with order $O(N^{-1})$. 
This is optimal for free-knot piecewise linear functions. 
This is consistent with the analytical results in the Section 4. 
In addition, we also compute the errors of the minimal energies by computing
\begin{equation*}
err_{eng}=|\E(\phi_h^\infty)-\sigma|,
\end{equation*}
where $\sigma=\frac{2\sqrt{2}}{3}$ is the total energy corresponding to the profile $\phi^{\infty}$.
We  see  that the errors of the energy decrease with order $O(N^{-2})$.
  
\begin{table}[h]\small
\caption{\small The $H^1$ errors of the stationary profile and the errors of the minimial energies in Example~3.}\label{tab:ErrorEx3}
\vspace{-0.2cm}
\begin{center}
\begin{tabular}{l|lc|lc}
\hline
{\bf Adaptive} & $err_{H^1}$  & order &  $err_{eng}$   & order \\
\hline
$N=5$&1.0004 & --  &0.0264 &-- \\
$N=10$ &0.5025& 0.99  & 0.00646&2.03\\
$N=20$ &0.2641 & 0.93 & 0.00175 &1.88 \\
$N=40$&0.1268& 1.06 &0.000402&2.12 \\
$N=80$&0.0691& 0.88&0.000120& 1.87 \\
\hline
\end{tabular}
\end{center}
\end{table}

{\bf Example 4.} In the last example, we test for a smaller $\eps=0.01$, which corresponds to a sharper inner layer. 
All other setups are the same as in the previous example. 

In Figure~\ref{fig:Example4}, we show the the stationary solutions of the Allen-Cahn equation for various different
choice of $N$. We see that the very sharp inner layer is resolved nicely by the moving finite element method even when
$N=5$. 
In Table~\ref{tab:ErrorEx4}, we show the $H^1$~errors between $\phi_h^{\infty}$ and $\phi^\infty(x)$
and also the errors of the minimal energy. We  see that the $H^1$ errors decrease with order $O(N^{-1})$
and the errors of the energies are of order $O(N^{-2})$. Both of them are optimal. 
This again verifies the theoretical results in the previous section.

 \begin{figure}[ht!]\label{fig:Example4}
 \centering
  \subfigure[$N=5$]{
   \includegraphics[width=0.22\textwidth]{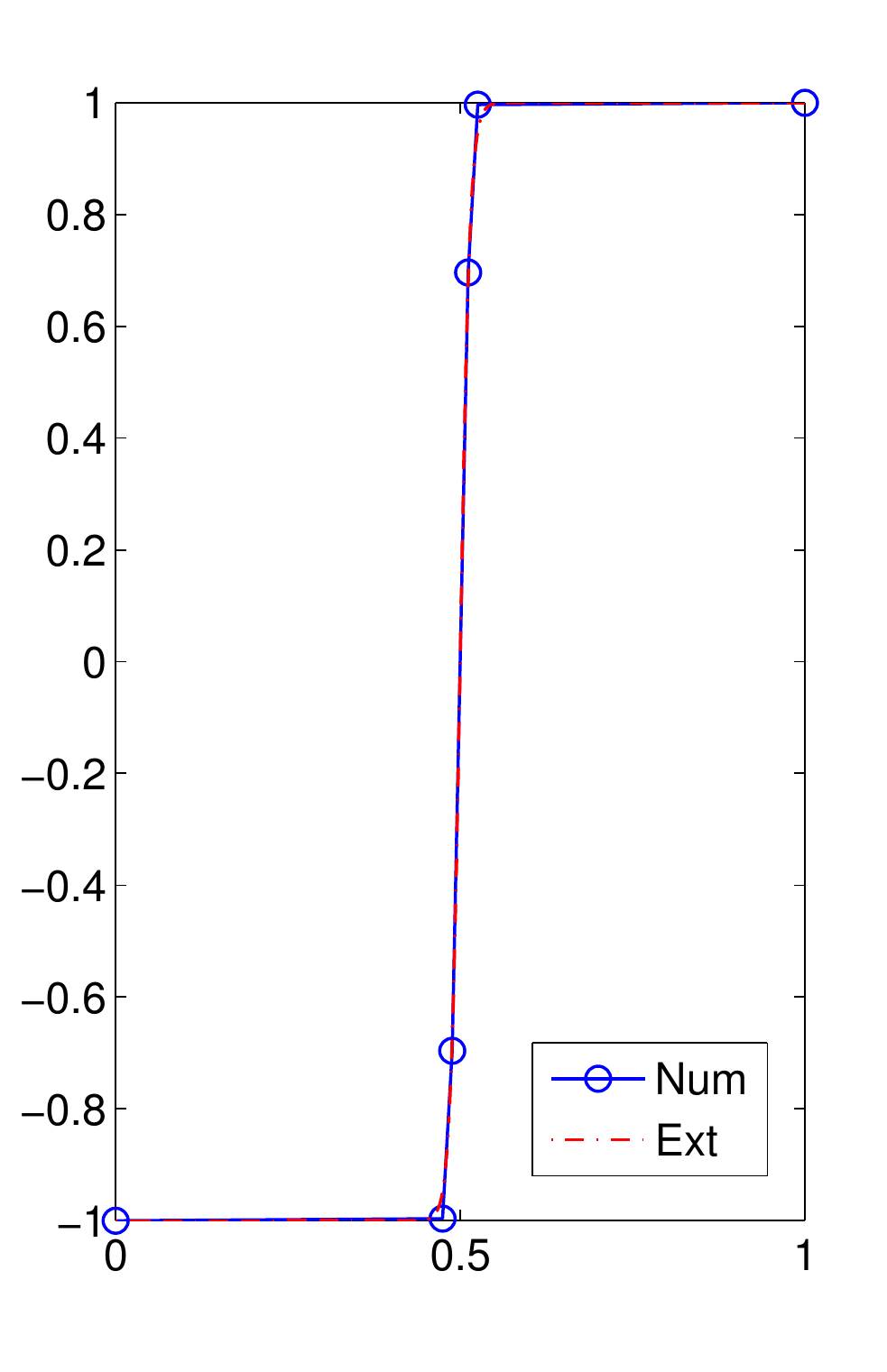}
}
  \subfigure[$N=10$]{
   \includegraphics[width=0.22\textwidth]{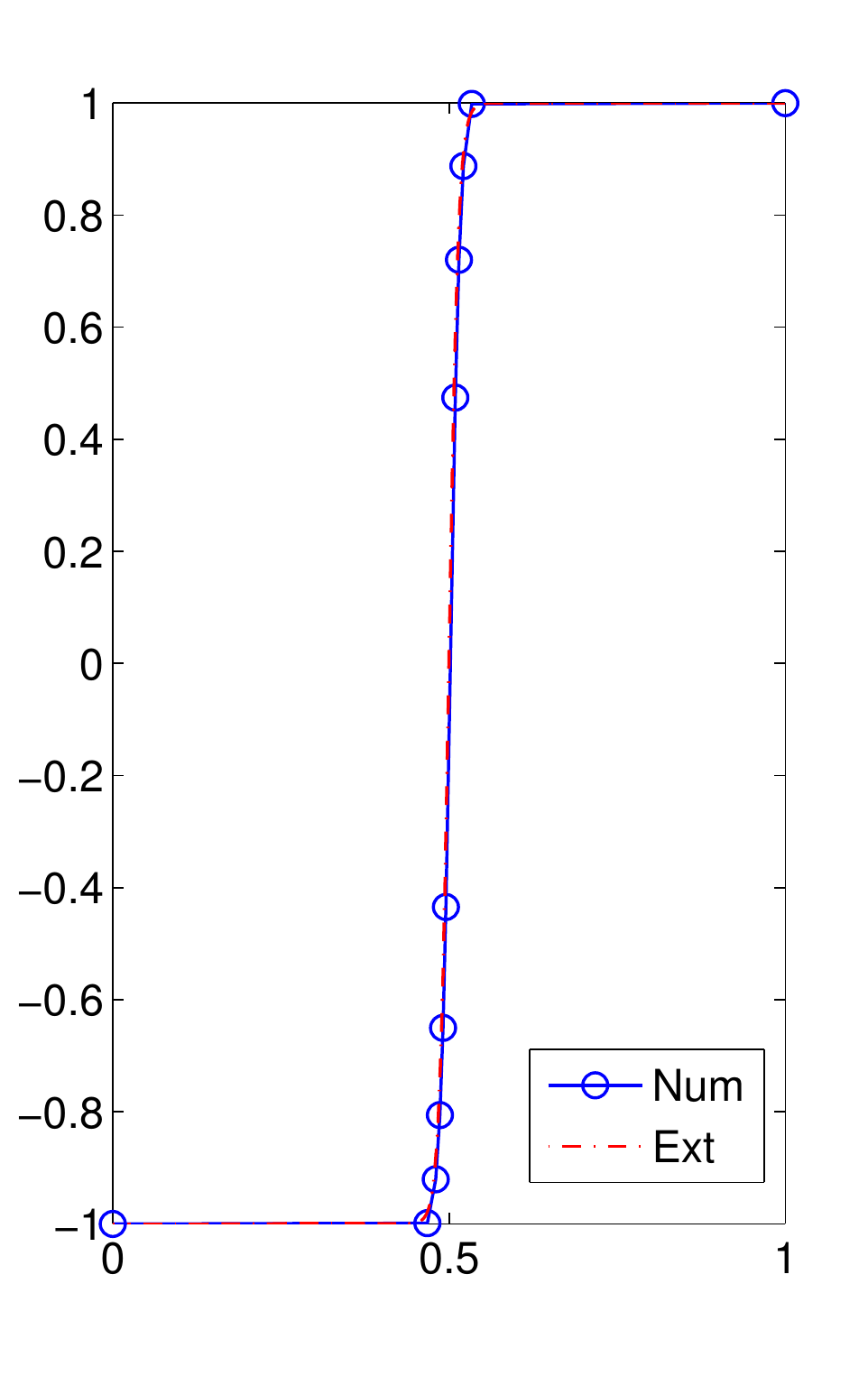}
}
  \subfigure[$N=20$]{
   \includegraphics[width=0.22\textwidth]{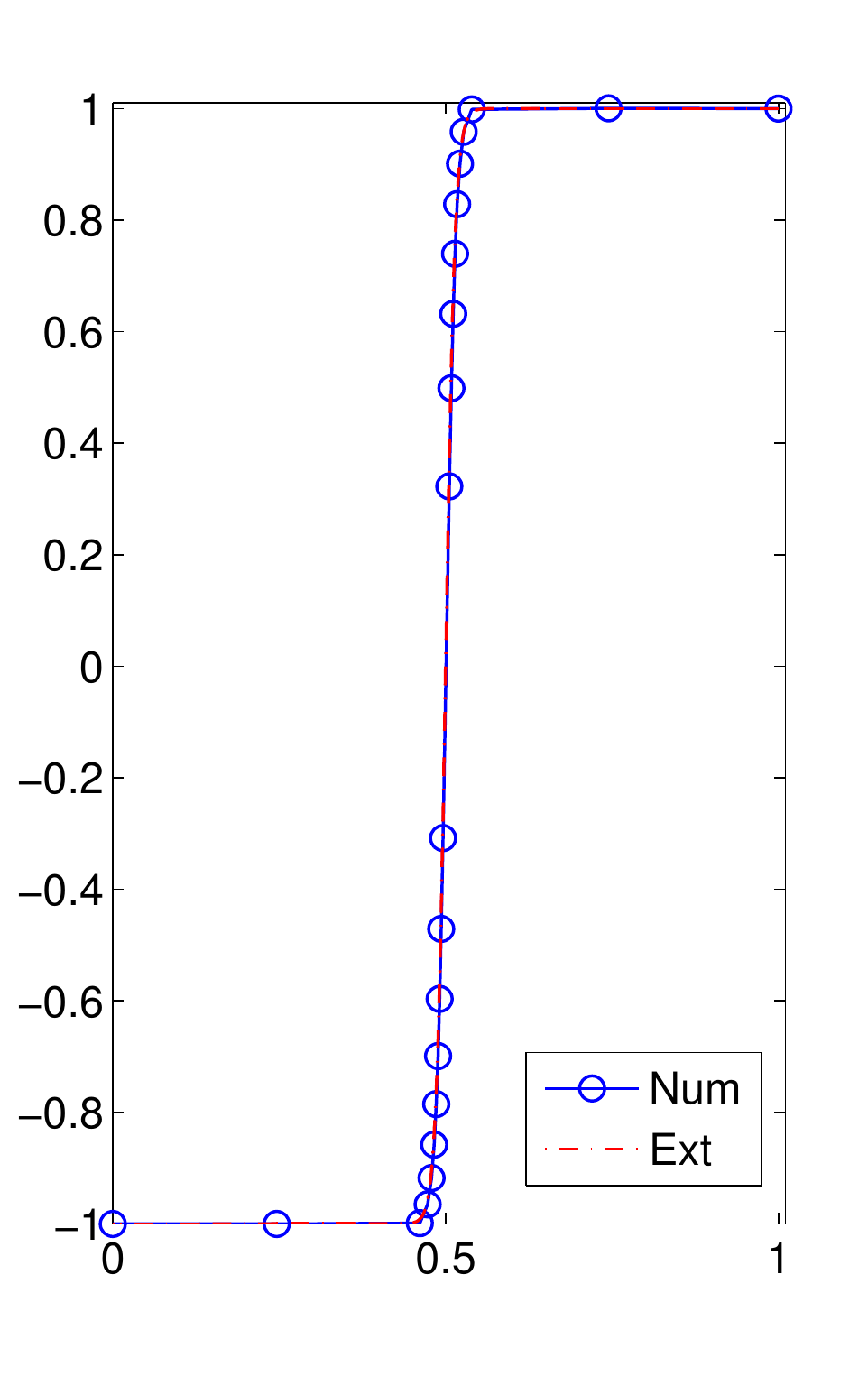}
}
  \subfigure[$N=40$]{
   \includegraphics[width=0.22\textwidth]{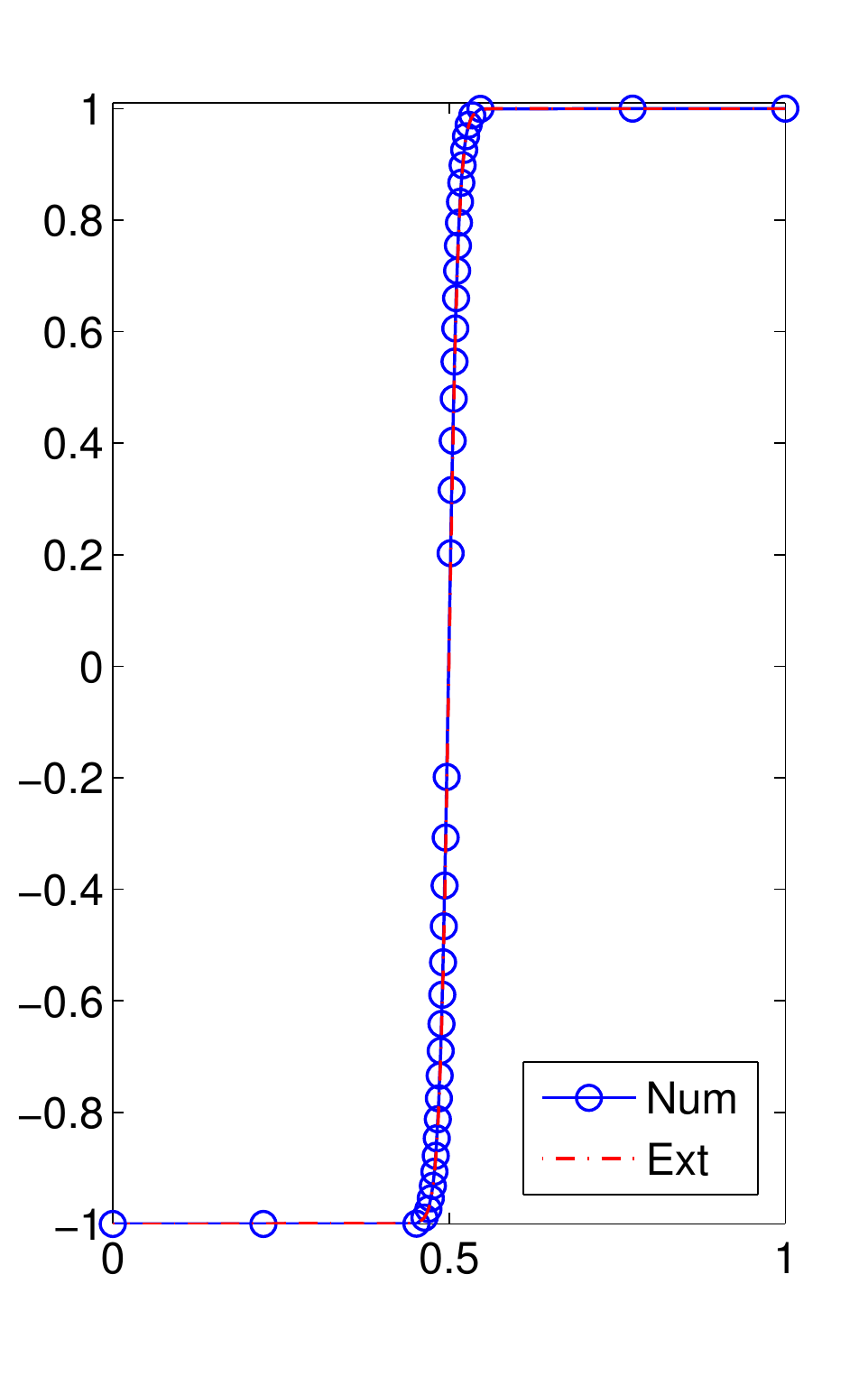}
}

 \caption{The numerical solutions of the Allen-Chan equation in stationary sate with $\eps=0.01$
 for various $N=5,10,20,40$.   The small circles in the figures mark the points $(x_k,\phi_{h,k})$.
 }
\end{figure}

\begin{table}[h]\small
\caption{\small The $H^1$ errors of the stationary profile and the errors of the minimial energies in Example~4.}\label{tab:ErrorEx4}
\vspace{-0.2cm}
\begin{center}
\begin{tabular}{l|lc|lc}
\hline
{\bf Adaptive} & $err_{H^1}$ & order &  $err_{eng}$   & order \\
\hline
$N=5$&2.2779 & --  &0.0281 &-- \\
$N=10$ &1.7365& 0.39 & 0.0067&2.07\\
$N=20$ &0.8043 &1.11 & 0.0017 &1.98 \\
$N=40$&0.3728& 1.11&0.000467&1.86 \\
$N=80$&0.1641& 1.18&0.000135& 1.79 \\
\hline
\end{tabular}
\end{center}
\end{table}

%
\section{Conclusions}
We show that the Onsager principle 
can act as a  variational framework for the MFEM for a gradient flow system.
The derivation of the method using the principle is much easier than the original approach in \cite{miller1981movingA}.
The discrete problem has the same energy dissipation structure as the
continuous one. 
This helps us to do numerical analysis for
a long time(stationary) solution of the gradient flow system. Under some conditions, 
we prove that the MFEM gives locally best approximation for the energy.
The optimal convergence rate can be obtained if a global minimizer is detected in
the free-knot piecewise linear function space. Although we consider only the one dimensional
case, the analysis can be generated to high dimensional
problems directly. In addition, it is also possible to consider more complicated gradient flows.

 In this paper, we restrict our analysis on  the stationary solution, although
 numerical experiments show that the method has optimal convergence rate
  for time dependent solutions
 (at least) for the linear diffusion problem.
Numerical analysis for the dynamic problems  will be left for future study. 
%
 

\section*{Acknowledgment}
This work was supported in part by NSFC grants DMS-11971469
 and  the National Key R\&D Program of China under Grant 2018YFB0704304 and Grant 2018YFB0704300.
\bibliographystyle{abbrv}
\bibliography{onsager.bib}
\bibliographystyle{abbrv}
\end{document}